\newtheorem{theorem}{Theorem}[section]
\newtheorem{definition}[theorem]{Definition}
\newtheorem{remark}[theorem]{Remark}
\newtheorem{lem}[theorem]{Lemma}
\newtheorem{proposition}[theorem]{Proposition}
\begin{document}
\title[Constructing model categories with prescribed fibrant objects]
{Constructing model categories with prescribed fibrant objects}
\author[{A. E.} {Stanculescu}]{{Alexandru E.} {Stanculescu}}
\address{\newline Department of Mathematics and Statistics,
\newline Masaryk University,  Kotl\'{a}{\v{r}}sk{\'{a}} 2,\newline
611 37 Brno, Czech Republic}
\email{stanculescu@math.muni.cz}
\thanks{Supported by the project CZ.1.07/2.3.00/20.0003
of the Operational Programme Education for Competitiveness of 
the Ministry of Education, Youth and Sports of the Czech Republic
\newline
\indent}

\begin{abstract}
We present a weak form of a recognition 
principle for Quillen model categories due to 
J.H. Smith. We use it to put a model category 
structure on the category of small categories 
enriched over a suitable monoidal simplicial model 
category. The proof uses a part of the model structure 
on small simplicial categories due to J. Bergner.
We give an application of the weak form of Smith's 
result to left Bousfield localizations of categories 
of monoids in a suitable monoidal model category.
\end{abstract}

\maketitle

There are nowadays several recognition principles 
that allow one to put a Quillen model category 
structure on a given category. For the purposes
of this work we divide them into those that make 
use of the small object agument and those that 
don't. A recognition principle that makes use 
of the small object argument is the following 
theorem of J.H. Smith.

\begin{theorem}\cite[Theorem 1.7]{Bek} Let $\mathcal{E}$ 
be a locally presentable category, {\rm W} a full accessible 
subcategory of $\mathrm{Mor}(\mathcal{E})$, and $I$ a 
set of morphisms of $\mathcal{E}$. Suppose they satisfy:

$c0:$ $\mathrm{W}$ has the two out of three property.

$c1:$ $\mathrm{inj}(I)\subset{\rm W}$.

$c2:$ The class ${\rm cof}(I)\cap {\rm W}$ is closed under
transfinite composition and under pushout.

Then setting weak equivalences:={\rm W},
cofibrations:=$\mathrm{cof}(I)$ and
fibrations:=$\mathrm{inj}(\mathrm{cof}(I)\cap {\rm W})$, 
one obtains a cofibrantly generated model structure 
on $\mathcal{E}$.
\end{theorem}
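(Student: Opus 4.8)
The plan is to exhibit the two weak factorization systems that a cofibrantly generated model structure must carry: $(\mathrm{cof}(I),\mathrm{inj}(I))$ for (cofibrations, trivial fibrations) and $(\mathrm{cof}(I)\cap\mathrm{W},\ \mathrm{inj}(\mathrm{cof}(I)\cap\mathrm{W}))$ for (trivial cofibrations, fibrations). The first is delivered by the small object argument applied to $I$, which is available because every object of a locally presentable category is presentable, hence small. All the work is in the second system: since $\mathrm{cof}(I)\cap\mathrm{W}$ is in general a proper class, one must produce a \emph{set} $J\subseteq\mathrm{cof}(I)\cap\mathrm{W}$ with $\mathrm{cof}(J)=\mathrm{cof}(I)\cap\mathrm{W}$. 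Once such a $J$ is in hand, the two-out-of-three property $c0$, the retract argument, and the standard recognition theorem for cofibrantly generated model categories assemble the structure, with fibrations $=\mathrm{inj}(J)=\mathrm{inj}(\mathrm{cof}(I)\cap\mathrm{W})$. Producing $J$ is the step I expect to be the main obstacle, and it is precisely where accessibility of $\mathrm{W}$ enters: it is the passage from a proper class to a set.

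To build $J$, first I would fix a regular cardinal $\kappa$ large enough that $\mathcal{E}$ is locally $\kappa$-presentable, the domains and codomains of the members of $I$ are $\kappa$-presentable, $|I|<\kappa$, and---this is the use of accessibility---$\mathrm{W}$ is closed under $\kappa$-filtered colimits in $\mathrm{Mor}(\mathcal{E})$ and is generated under such colimits by a \emph{set} $G$ of its objects whose domains and codomains are $\kappa$-presentable in $\mathcal{E}$; such a $\kappa$ exists by the usual ``sharply larger cardinal'' bookkeeping for accessible, accessibly embedded subcategories. For each $f\in G$, the small object argument for $I$ gives a factorization $f=p_f\circ c_f$ with $c_f\in\mathrm{cell}(I)\subseteq\mathrm{cof}(I)$ and $p_f\in\mathrm{inj}(I)$; by $c1$ we get $p_f\in\mathrm{W}$, hence by $c0$ also $c_f\in\mathrm{W}$, while $\mathrm{dom}(c_f)=\mathrm{dom}(f)$ is $\kappa$-presentable. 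I then set $J:=\{\,c_f : f\in G\,\}$: a set, contained in $\mathrm{cof}(I)\cap\mathrm{W}$, with $\kappa$-presentable (so small) domains.

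The crucial claim is then $\mathrm{inj}(J)\cap\mathrm{W}\subseteq\mathrm{inj}(I)$ (the inclusions $\mathrm{inj}(I)\subseteq\mathrm{inj}(J)$ and $\mathrm{inj}(I)\subseteq\mathrm{W}$ being formal, the latter being $c1$). To prove it, take $p\colon X\to Y$ in $\mathrm{W}\cap\mathrm{inj}(J)$ and write it as a $\kappa$-filtered colimit $p=\mathrm{colim}_{t\in T}p_t$ in $\mathrm{Mor}(\mathcal{E})$ with each $p_t\colon X_t\to Y_t$ an object of $\mathrm{W}$ having $\kappa$-presentable domain and codomain; each $p_t$ may be taken to lie in $G$, so $c_{p_t}\in J$ and $p_t=q_t\circ c_{p_t}$ with $q_t\in\mathrm{inj}(I)$. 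Because $p$ lifts on the right against $c_{p_t}\in J$, the square whose top is the colimit structure map $X_t\to X$, left leg $c_{p_t}$, right leg $p$, and bottom $q_t$ followed by the structure map $Y_t\to Y$ admits a diagonal filler $r_t\colon E_{p_t}\to X$. Now, given a lifting problem of a map $i\colon A\to B$ of $I$ against $p$, presentability of $A$ and $B$ together with $\kappa$-filteredness of $T$ let it be factored through some stage, yielding a square of $i$ against $q_t$ that commutes in $\mathcal{E}$; solving that square (as $q_t\in\mathrm{inj}(I)$ and $i\in I$) and post-composing with $r_t$ gives the required diagonal against $p$. Hence $p\in\mathrm{inj}(I)$, and the claim is proved. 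The genuinely hard part is the choice of $\kappa$ in the previous paragraph---arranging it so that this filtered-colimit argument has a \emph{fixed} set $J$ to work with.

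Finally I would package the model structure. By $c2$, $\mathrm{cell}(J)\subseteq\mathrm{cof}(I)\cap\mathrm{W}$, and since $\mathrm{cof}(I)$ and (by accessibility) $\mathrm{W}$ are closed under retracts, $\mathrm{cof}(J)\subseteq\mathrm{cof}(I)\cap\mathrm{W}$. Conversely, for $g\in\mathrm{cof}(I)\cap\mathrm{W}$ the small object argument for $J$ factors $g$ as $\bar g\in\mathrm{cell}(J)$ followed by $\bar q\in\mathrm{inj}(J)$; by $c0$ (and $\mathrm{cell}(J)\subseteq\mathrm{W}$) one has $\bar q\in\mathrm{inj}(J)\cap\mathrm{W}=\mathrm{inj}(I)$ by the claim, so $g$ lifts against $\bar q$ and the retract argument exhibits $g$ as a retract of $\bar g$, whence $g\in\mathrm{cof}(J)$. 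Thus $\mathrm{cof}(J)=\mathrm{cof}(I)\cap\mathrm{W}$ and $\mathrm{inj}(J)=\mathrm{inj}(\mathrm{cof}(I)\cap\mathrm{W})$. At this point the ingredients---$\mathcal{E}$ bicomplete, smallness of the domains of $I$ and $J$, the two factorizations, the lifting properties, closure of $\mathrm{W}$ under retracts, and $c0$---are exactly the hypotheses of the recognition theorem for cofibrantly generated model categories, which then produces the asserted structure. (One records at the outset the standard facts that $\mathrm{Mor}(\mathcal{E})\cong\mathcal{E}^{[1]}$ is locally presentable and that a full replete accessible subcategory of a locally presentable category is closed under retracts, the idempotent splitting being a sequential colimit.)
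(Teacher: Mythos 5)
This statement is quoted from Beke (Smith's theorem) and the paper gives no proof of it, so there is nothing internal to compare against; measured against the standard argument (Beke's own proof of Theorem~1.7, which is essentially Smith's), your proposal follows the same route and is correct in outline. Your key lemma $\mathrm{inj}(J)\cap\mathrm{W}\subseteq\mathrm{inj}(I)$, with $J$ obtained by taking the $I$-cellular parts of $(\mathrm{cell}(I),\mathrm{inj}(I))$-factorizations of a generating set of $\kappa$-presentable objects of $\mathrm{W}$, and the filtered-colimit lifting argument (factor the lifting problem through a stage, lift against $q_t\in\mathrm{inj}(I)$, push forward along the lift $r_t$ supplied by $p\in\mathrm{inj}(J)$) is exactly the heart of Beke's proof; the final assembly via $\mathrm{cof}(J)=\mathrm{cof}(I)\cap\mathrm{W}$, $c2$, and Kan's recognition theorem is also the standard packaging. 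Two small points to tighten. First, the retract-closure of $\mathrm{W}$: the idempotent splitting should be realized as a $\kappa$-filtered colimit (e.g.\ the chain of length $\kappa$ with transition maps the idempotent), not merely a sequential one, since $\mathrm{W}$ is only assumed closed under $\kappa$-filtered colimits for a possibly uncountable $\kappa$; the fix is immediate. Second, your argument reads ``full accessible subcategory'' as accessible \emph{and} accessibly embedded (closed under $\kappa$-filtered colimits in $\mathrm{Mor}(\mathcal{E})$), and moreover invokes the uniformization theorem to get a single $\kappa$ for which the $\kappa$-presentable objects of $\mathrm{W}$ are $\kappa$-presentable in $\mathrm{Mor}(\mathcal{E})$; this is indeed the intended reading in \cite{Bek} and the cardinal bookkeeping is standard (Ad\'amek--Rosick\'y), but it deserves an explicit citation since it is the one place where the hypothesis on $\mathrm{W}$ is genuinely consumed.
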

We can say that $(a)$ in practice, it is condition $c2$ above 
that is often the most difficult to check and $(b)$ the result 
gives \emph{no} description of the fibrations of the 
resulting model structure. Another recognition principle 
that makes use of the small object argument is a
result of D.M. Kan \cite[Theorem 11.3.1]{Hi}, 
\cite[Theorem 2.1.19]{Ho}. We can say that Kan's 
result gives a \emph{full} description of the fibrations 
of the resulting model structure. In this paper we

(1) advertise (see Proposition 1.3) an abstraction 
of a technique due to D.-C. Cisinski 
\cite[Proof of Th\'{e}or\`{e}me 1.3.22]{Ci} and 
A. Joyal (unpublished, but present in his proof, circa 1996, 
of the model structure for quasi-categories) that addresses 
both $(a)$ and $(b)$ above, in the sense that it makes $c2$ 
easier to check and it gives a \emph{partial} description 
of the fibrations of the resulting model structure---namely
the fibrant objects and the fibrations between them are 
described---provided that other assumptions hold, and 

(2) give an application of this technique to the homotopy 
theory of categories enriched over a suitable monoidal 
simplicial model category (see Theorem 2.3) and to left 
Bousfield localizations of categories of monoids in a 
suitable monoidal model category (see Theorem 4.5).

The paper is organized as follows. In Section 1 we detail 
the above mentioned technique. The two out of six 
property of a class of maps of Dwyer et al. \cite{DHKS} 
plays an important role. In Section 2 we prove that 
the category of small categories enriched over a 
monoidal simplicial model category that satisfies
some assumptions, admits a certain model category 
structure. Our proof uses one result of the non-formal 
part of the proof of the analogous model structure for 
categories enriched over the category of simplicial sets, 
due to J. Bergner \cite{Be}. We modify one of the steps 
in Bergner's proof; this modification is a key point in our 
approach and it enables us to apply the technique from 
Section 1. We also fix (see Remark 2.8), in an appropiate 
way, a mistake in \cite{St}. The idea to use the model 
structure for categories enriched over the category of 
simplicial sets is due to G. Tabuada \cite{Ta}. 
In Section 3 we extend a result of R. Fritsch 
and D.M. Latch \cite[Proposition 5.2]{FL} 
to enriched categories; this is needed in the proof of the 
main result of Section 2. The section is self contained. 
Motivated by considerations from \cite{Ho2}, we apply 
in Section 4 the technique from Section 1 to the study 
of left Bousfield localizations of categories of monoids. 
Precisely, let $L${\bf M} be a left Bousfield localization 
of a monoidal model category {\bf M}. We consider 
the problem of putting a model 
category structure on the category of monoids 
in {\bf M}, somehow related to $L${\bf M}.

\section{Constructing model categories with 
prescribed fibrant objects}

We recall from \cite{DHKS} the following definitions. Let $\mathcal{E}$ 
be an arbitrary category and {\rm W} a class of maps of $\mathcal{E}$. 
{\rm W} is said to satisfy the \emph{two out of six property} if for every 
three maps $r,s,t$ of $\mathcal{E}$ for which the two compositions 
$sr$ and $ts$ are defined and are in {\rm W}, the four maps 
$r,s,t$ and $tsr$ are in {\rm W}. {\rm W} is said to satisfy the 
\emph{weak invertibility property} if every map $s$ of $\mathcal{E}$ for 
which there exist maps $r$ and $t$ such that the compositions $sr$ and 
$ts$ exist and are in {\rm W}, is itself in {\rm W}. The two out of six 
property implies the two out of three property. The converse holds 
in the presence of the weak invertibility property.

The terminal object of a category, when it exists, 
is denoted by $1$.

Let $\mathcal{E}$ be a locally presentable 
category and $J$ a set of maps of $\mathcal{E}$. 
Then the pair $({\rm cof}(J),{\rm inj}(J))$
is a weak factorization system on $\mathcal{E}$
\cite[Proposition 1.3]{Bek}.
We call a map of $\mathcal{E}$ that belongs 
to ${\rm inj}(J)$ a \emph{naive fibration}, 
and say that an object $X$ of $\mathcal{E}$ 
is \emph{naively fibrant} if $X\rightarrow 1$ 
is a naive fibration. We denote the class of
naive fibrations between naively fibrant objects
by ${\rm inj}_{0}(J)$.
\begin{lem} {\rm (D.-C. Cisinski, A. Joyal)}
Let $\mathcal{E}$ be a locally presentable category, 
$(\mathcal{A},\mathcal{B})$ a weak factorisation 
system on $\mathcal{E}$, {\rm W} a class of maps 
of $\mathcal{E}$ satisfying the two out of six property 
and $J$ a set of maps of $\mathcal{E}$. 

$(1)$ Suppose that $cell(J)\subset {\rm W}$. Then a 
map that has the left lifting property with respect to 
maps in ${\rm inj}_{0}(J)$ belongs to {\rm W}.

$(2)$ Suppose that $cell(J)\subset {\rm W}$ and that
${\rm inj}_{0}(J)\cap{\rm W}\subset\mathcal{B}$. 
Then a map in $\mathcal{A}$ belongs to {\rm W} 
if and only if it has the left lifting property with 
respect to the maps in ${\rm inj}_{0}(J)$.
In particular, $\mathcal{A}\cap {\rm W}$ is 
closed under pushouts and transfinite compositions.

$(3)$ Suppose that $cell(J)\subset \mathcal{A}\cap{\rm W}$ 
and that ${\rm inj}_{0}(J)\cap{\rm W}\subset\mathcal{B}$.
Then an object $X$ of $\mathcal{E}$ is naively fibrant 
if and only if the map $X\rightarrow 1$ is in 
$\mathrm{inj}(\mathcal{A}\cap {\rm W})$.
Also, a map between naively fibrant objects is in 
$\mathrm{inj}(\mathcal{A}\cap {\rm W})$ if and 
only if it is a naive fibration.
\end{lem}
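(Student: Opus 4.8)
The plan is to prove $(1)$ directly and then obtain $(2)$ and $(3)$ by formal manipulations of lifting properties, with $(2)$ resting on $(1)$ and $(3)$ resting on $(2)$. Since $\mathcal{E}$ is locally presentable and $J$ is a set, the small object argument applies to $J$: every map of $\mathcal{E}$ factors functorially as a map in $\mathrm{cell}(J)$ followed by a naive fibration. In particular every object $Y$ has a naively fibrant replacement $j_{Y}\colon Y\to\widehat{Y}$ with $j_{Y}\in\mathrm{cell}(J)$, and the terminal object $1$ is naively fibrant.

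For $(1)$, let $f\colon X\to Y$ have the left lifting property with respect to $\mathrm{inj}_{0}(J)$. First I would take a naively fibrant replacement $j_{Y}\colon Y\to\widehat{Y}$ and factor $j_{Y}f$ as $X\xrightarrow{\,t\,}Z\xrightarrow{\,p\,}\widehat{Y}$ with $t\in\mathrm{cell}(J)$ and $p$ a naive fibration; since $\widehat{Y}$ is naively fibrant, so is $Z$, hence $p\in\mathrm{inj}_{0}(J)$. The square with top $t$ and bottom $j_{Y}$ is a lifting problem of $f$ against $p$, and a solution $s\colon Y\to Z$ satisfies $sf=t$ and $ps=j_{Y}$. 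Since $sf=t\in\mathrm{cell}(J)\subset\mathrm{W}$ and $ps=j_{Y}\in\mathrm{cell}(J)\subset\mathrm{W}$, the two out of six property applied to the composable triple $f,s,p$ forces $f\in\mathrm{W}$. This step is exactly where two out of six, rather than merely two out of three, is needed.

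For $(2)$, the ``if'' direction is $(1)$. For ``only if'', let $f\colon X\to Y$ lie in $\mathcal{A}\cap\mathrm{W}$, let $p\colon E\to B$ lie in $\mathrm{inj}_{0}(J)$, and consider a lifting problem $(u,v)$ of $f$ against $p$. I would replace $Y$ fibrantly by $j_{Y}\colon Y\to\widehat{Y}$ and factor $j_{Y}f$ as $X\xrightarrow{\,t\,}Z\xrightarrow{\,q\,}\widehat{Y}$ with $t\in\mathrm{cell}(J)$ and $q$ a naive fibration; then $Z$ is naively fibrant, so $q\in\mathrm{inj}_{0}(J)$, and $q\in\mathrm{W}$ by two out of three, hence $q\in\mathcal{B}$ by hypothesis. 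As $f\in\mathcal{A}$ lifts against $\mathcal{B}$ and $j_{Y}\in\mathrm{cof}(J)$ lifts against $\mathrm{inj}(J)$, the composite $j_{Y}f$ lifts against $q$; the retract argument applied to the factorisation $j_{Y}f=q\circ t$ then exhibits $j_{Y}f$ as a retract of $t\in\mathrm{cell}(J)$, so $j_{Y}f$ has the left lifting property with respect to $\mathrm{inj}_{0}(J)$. Finally I would extend $v$ along $j_{Y}$ to $\overline{v}\colon\widehat{Y}\to B$ (possible since $B$ is naively fibrant and $j_{Y}\in\mathrm{cof}(J)$), solve the resulting lifting problem of $j_{Y}f$ against $p$, and restrict the solution along $j_{Y}$ to obtain a solution of $(u,v)$. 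The hard part is precisely this passage: the lifting problems to be solved involve naive fibrations between naively fibrant objects, while $f$ has a possibly non-naively-fibrant codomain, and the device that makes the fibrant replacement of $Y$ harmless is that $q$ is forced into $\mathcal{B}$ by the hypothesis $\mathrm{inj}_{0}(J)\cap\mathrm{W}\subset\mathcal{B}$, which is exactly what allows the retract argument to run. The ``in particular'' clause then follows formally, since by $(1)$ and the above $\mathcal{A}\cap\mathrm{W}$ equals the class of maps having the left lifting property with respect to $\mathcal{B}\cup\mathrm{inj}_{0}(J)$, and any class of this form is closed under pushouts and transfinite compositions.

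For $(3)$, everything is a formal consequence of $(2)$ together with $\mathrm{cell}(J)\subset\mathcal{A}\cap\mathrm{W}$. If $X$ is naively fibrant then, $1$ being naively fibrant too, $X\to 1$ lies in $\mathrm{inj}_{0}(J)$, so by $(2)$ every map in $\mathcal{A}\cap\mathrm{W}$ lifts against it, that is, $X\to 1\in\mathrm{inj}(\mathcal{A}\cap\mathrm{W})$; conversely $J\subset\mathrm{cell}(J)\subset\mathcal{A}\cap\mathrm{W}$ forces $\mathrm{inj}(\mathcal{A}\cap\mathrm{W})\subset\mathrm{inj}(J)$, so $X\to 1\in\mathrm{inj}(\mathcal{A}\cap\mathrm{W})$ implies $X$ naively fibrant. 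The statement for a map between naively fibrant objects is proved in the same way, using that such a map lies in $\mathrm{inj}_{0}(J)$ as soon as it lies in $\mathrm{inj}(J)$.
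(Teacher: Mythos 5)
Your proof is correct, and parts $(1)$ and $(3)$ coincide with the paper's: for $(1)$ the paper also replaces the codomain fibrantly, factors the composite into a $cell(J)$-map followed by a naive fibration, solves one lifting problem and applies two out of six to the triple $f,s,p$; and $(3)$ is exactly the routine deduction from $(2)$ that the paper leaves to the reader. Where you genuinely diverge is the forward direction of $(2)$. The paper stays inside the given square: it factors the bottom map $v$ as a $cell(J)$-map $B\rightarrow\bar{B}$ followed by a naive fibration $\bar{B}\rightarrow Y$, then factors the gap map $A\rightarrow\bar{B}\times_{Y}X$, so that the problem reduces to one lift of $i\in\mathcal{A}$ against $\bar{A}\rightarrow\bar{B}$, which lies in ${\rm inj}_{0}(J)\cap{\rm W}\subset\mathcal{B}$ by two out of three. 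You instead replace the codomain of $f$ fibrantly once and for all, force $q\in\mathcal{B}$ by the same hypothesis, run the retract argument to conclude $j_{Y}f\in{\rm cof}(J)$, and then transport the given lifting problem along $j_{Y}$, using naive fibrancy of the codomain of $p$ to extend $v$ and restricting the lift back along $j_{Y}$. The two mechanisms buy slightly different things: the paper's version uses a pullback but produces the filler in a single step tailored to the square at hand, while yours avoids pullbacks and isolates a reusable intermediate fact (composing a map of $\mathcal{A}\cap{\rm W}$ with a naively fibrant replacement of its codomain yields a $J$-cofibration), at the cost of the extra extension-and-restriction bookkeeping. Both arguments turn on the identical key point, namely manufacturing a naive fibration between naively fibrant objects that lies in ${\rm W}$, hence in $\mathcal{B}$, against which the $\mathcal{A}$-map can be lifted; and your verification of the details ($q\in{\rm inj}_{0}(J)$ because $Z$ and $\widehat{Y}$ are naively fibrant, $q\in{\rm W}$ by two out of three, commutativity of the transported square, and the description of $\mathcal{A}\cap{\rm W}$ as the maps with the left lifting property against $\mathcal{B}\cup{\rm inj}_{0}(J)$ for the closure claim) is all in order.
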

\begin{proof}
$(1)$ Let $i:A\rightarrow B$ be a map which has the 
left lifting property with respect to the naive fibrations 
between naively fibrant objects. Factorize (see, 
for example, \cite[Proposition 1.3]{Bek}) the map 
$B\rightarrow 1$ as $B\rightarrow \bar{B}\rightarrow 1$, 
where $B\rightarrow \bar{B}$ is in $cell(J)$ 
and $\bar{B}$ is naively fibrant. Next, factorize the 
composite map $A\rightarrow \bar{B}$ as a map
$A\rightarrow \bar{A}$ in $cell(J)$ 
followed by a naive fibration $\bar{A} \rightarrow 
\bar{B}$. The resulting commutative diagram
\[
\xymatrix{
A \ar[r] \ar[d]_{i} & \bar{A} \ar[d]\\
B \ar[r] & \bar{B}\\
}
   \]
has then a diagonal filler, and so the hypothesis 
and the two out of six property of {\rm W} imply 
that $i$ is in {\rm W}. 

$(2)$ Let 
\[
   \xymatrix{
A \ar[r]^{u} \ar[d]_{i} & X \ar[d]^{p}\\
B \ar[r]^{v} & Y\\
  }
  \]
be a commutative diagram with $i$ in 
$\mathcal{A}\cap {\rm W}$ and $p$ in 
${\rm inj}_{0}(J)$. Factorize 
$v$ as a map $B\rightarrow \bar{B}$ in $cell(J)$ 
followed by a naive fibration $\bar{B}\rightarrow Y$. 
Next, factorize the canonical map 
$A\rightarrow \bar{B}\underset{Y}\times X$ 
as a map $A\rightarrow \bar{A}$ in $cell(J)$ 
followed by a naive fibration 
$\bar{A}\rightarrow \bar{B}\underset{Y}\times X$. 
It suffices to show that the square
\[
   \xymatrix{
A \ar[r] \ar[d]_{i} & \bar{A} \ar[d]\\
B \ar[r] & \bar{B}\\
}
   \]
has a diagonal filler. The map $\bar{A}\rightarrow \bar{B}$ 
is a naive fibration between naively fibrant objects.
It also belongs to {\rm W} by the two out of three property, 
and so by hypothesis it is in $\mathcal{B}$. 
Therefore the diagonal filler exists. 
The converse follows from $(1)$. Thus, in order to 
detect if an element of $\mathcal{A}$ is in
{\rm W} one can use the left lifting property with 
respect to a class of maps, namely ${\rm inj}_{0}(J)$. 
In particular, $\mathcal{A}\cap {\rm W}$
is closed under pushouts and transfinite compositions.

$(3)$ This is straightforward from $(2)$.
\end{proof}
\begin{remark}
{\rm One can make variations in Lemma 1.1. For example, 
the path object argument devised by Quillen shows that the 
conclusion of $(1)$ remains valid if instead of
$cell(J)\subset {\rm W}$ one requires that}
$\mathcal{E}$ has a functorial naively fibrant replacement 
functor and every naively fibrant object has a naive path object. 
{\rm This new requirement implies that
$cell(J)\subset {\rm W}$.}
\end{remark}
The following result makes the connection 
between Smith's Theorem and Lemma 1.1.
\begin{proposition}
Let $\mathcal{E}$ be a locally presentable category, 
{\rm W} a full accessible subcategory of
$\mathrm{Mor}(\mathcal{E})$ and $I$ and 
$J$ be two sets of morphisms of $\mathcal{E}$. 
Let us call a map of $\mathcal{E}$ that belongs to 
${\rm inj}(J)$ a \emph{naive fibration}, 
and an object $X$ of $\mathcal{E}$ 
\emph{naively fibrant} if $X\rightarrow 1$ is 
a naive fibration. Suppose the following
conditions are satisfied:

$c0:$ $\mathrm{W}$ has the two out of three property.

$c1:$ $\mathrm{inj}(I)\subset{\rm W}$.

$nc0:$ {\rm W} has the weak invertibility property.

$nc1:$ $cell(J)\subset {\rm cof}(I)\cap {\rm W}$.

$nc2:$ A map between naively fibrant objects that is both 
a naive fibration and in {\rm W} is in {\rm inj}$(I)$.

Then the triple $({\rm W}, {\rm cof}(I), 
{\rm inj}({\rm cof}(I)\cap {\rm W}))$
is a model structure on $\mathcal{E}$. 
Moreover, an object of $\mathcal{E}$ is 
fibrant if and only if it is naively fibrant, 
and the fibrations between fibrant 
objects are the naive fibrations.
\end{proposition}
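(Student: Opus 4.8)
The plan is to verify the hypotheses of Smith's Theorem for the triple $(\mathrm{W}, \mathrm{cof}(I), \mathrm{inj}(\mathrm{cof}(I)\cap\mathrm{W}))$, and then read off the characterization of fibrant objects and fibrations between them from Lemma 1.1. First I would observe that $nc0$ together with $c0$ gives that $\mathrm{W}$ has the two out of six property (this is exactly the statement recalled from \cite{DHKS} at the start of the section), so that Lemma 1.1 is applicable with the weak factorization system $(\mathcal{A},\mathcal{B}) = (\mathrm{cof}(I),\mathrm{inj}(I))$, which exists by \cite[Proposition 1.3]{Bek}. To invoke Lemma 1.1(2) and (3) I need $cell(J)\subset\mathrm{W}$, which is immediate from $nc1$, and I need $\mathrm{inj}_0(J)\cap\mathrm{W}\subset\mathcal{B} = \mathrm{inj}(I)$; this last inclusion is precisely condition $nc2$, since $\mathrm{inj}_0(J)$ is by definition the class of naive fibrations between naively fibrant objects. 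Note also that $nc1$ gives the stronger hypothesis $cell(J)\subset\mathrm{cof}(I)\cap\mathrm{W} = \mathcal{A}\cap\mathrm{W}$ needed for part (3).

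Next I would check Smith's conditions $c0$, $c1$, $c2$. Conditions $c0$ and $c1$ are assumed outright. For $c2$, that $\mathrm{cof}(I)\cap\mathrm{W}$ is closed under transfinite composition and pushout, I apply Lemma 1.1(2): under the hypotheses just verified, a map in $\mathcal{A} = \mathrm{cof}(I)$ lies in $\mathrm{W}$ if and only if it has the left lifting property against $\mathrm{inj}_0(J)$, and the "in particular" clause of 1.1(2) states exactly that $\mathcal{A}\cap\mathrm{W}$ is closed under pushouts and transfinite compositions. Hence $c2$ holds, and Smith's Theorem yields that $(\mathrm{W}, \mathrm{cof}(I), \mathrm{inj}(\mathrm{cof}(I)\cap\mathrm{W}))$ is a cofibrantly generated model structure on $\mathcal{E}$, with fibrations $\mathrm{inj}(\mathrm{cof}(I)\cap\mathrm{W})$ — consistent with the stated triple.

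It remains to identify the fibrant objects and the fibrations between them. By definition an object $X$ is fibrant iff $X\rightarrow 1$ is in $\mathrm{inj}(\mathrm{cof}(I)\cap\mathrm{W}) = \mathrm{inj}(\mathcal{A}\cap\mathrm{W})$, and by Lemma 1.1(3) this holds iff $X$ is naively fibrant; likewise a map between naively (equivalently, genuinely) fibrant objects is in $\mathrm{inj}(\mathcal{A}\cap\mathrm{W})$ iff it is a naive fibration. So the moreover clause follows directly. One should also record that $\mathrm{cof}(I)\cap\mathrm{W}$ is indeed the class of trivial cofibrations of this model structure, which is automatic once the model structure is known (intersection of cofibrations with weak equivalences), so there is no circularity in writing the generating trivial cofibrations' saturation as $\mathrm{cof}(I)\cap\mathrm{W}$.

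The only genuinely delicate point is making sure the bookkeeping around $nc2$ is right: Lemma 1.1 requires the inclusion $\mathrm{inj}_0(J)\cap\mathrm{W}\subset\mathcal{B}$ for the weak factorization system $\mathcal{B} = \mathrm{inj}(I)$, and $nc2$ is phrased as "a map between naively fibrant objects that is a naive fibration and in $\mathrm{W}$ is in $\mathrm{inj}(I)$" — these match once one unwinds that $\mathrm{inj}_0(J)$ means naive fibrations between naively fibrant objects, where "naive fibration" and "naively fibrant" refer to $J$ (and $\mathcal{A}=\mathrm{cof}(I)$, $\mathcal{B}=\mathrm{inj}(I)$ refers to $I$). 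Everything else is a routine transcription, so I expect no real obstacle beyond keeping the two sets $I$ and $J$ and their associated weak factorization systems clearly separated.
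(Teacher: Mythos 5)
Your proposal is correct and follows essentially the same route as the paper: Smith's Theorem with $c2$ checked via the last part of Lemma 1.1(2) for the weak factorization system $({\rm cof}(I),{\rm inj}(I))$, and the fibrant objects and fibrations between them identified via Lemma 1.1(3). You are merely more explicit than the paper about the bookkeeping (two out of six from $c0$ plus $nc0$, and matching $nc1$, $nc2$ to the hypotheses of Lemma 1.1), which is fine.
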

\begin{proof}
We shall use Theorem 0.1. All
the assumptions of this result hold,
except possibly condition $c2$.
To check that $c2$ holds we apply the last part of
Lemma 1.1(2) to the weak factorization system 
$(\mathcal{A},\mathcal{B})=({\rm cof}(I),{\rm inj}(I))$. 
It follows that the triple $({\rm W}, {\rm cof}(I), 
{\rm inj}({\rm cof}(I)\cap {\rm W}))$
is a model structure. The characterization of 
fibrant objects and of the fibrations between 
fibrant objects is then a consequence 
of Lemma 1.1(3) applied to 
the weak factorization system 
$({\rm cof}(I),{\rm inj}(I))$.
\end{proof}
The following result is a variation of
Proposition 1.3, essentially due to 
A.K. Bousfield \cite[Proof of Theorem 9.3]{Bo}.
We leave the proof to the interested reader.
\begin{proposition}
Let $\mathcal{E}$ be a category that is
closed under limits and colimits and let
{\rm W} be a class of maps of $\mathcal{E}$
that has the two out of three property. If
$I$ and $J$ are two sets of morphisms of 
$\mathcal{E}$ such that

$(1)$ both $I$ and $J$ permit the small object
argument \cite[Definition 10.5.15]{Hi},

$(2)$ ${\rm inj}(I)\subset{\rm W}$,

$(3)$ $cell(J)\subset {\rm cof}(I)\cap {\rm W}$,

$(4)$ ${\rm inj}_{0}(J)\cap{\rm W}\subset {\rm inj}(I)$, 
and

$(5)$ the class {\rm W} is stable under pullback
along maps in ${\rm inj}_{0}(J)$,

then the triple $({\rm W}, {\rm cof}(I), 
{\rm inj}({\rm cof}(I)\cap {\rm W}))$
is a right proper model structure on $\mathcal{E}$. 
Moreover, an object of $\mathcal{E}$ is 
fibrant if and only if it is naively fibrant, 
and the fibrations between fibrant 
objects are the naive fibrations.
\end{proposition}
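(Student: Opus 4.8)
The plan is to verify the model category axioms for the triple $(\mathrm{W},\mathrm{cof}(I),\mathrm{inj}(\mathrm{cof}(I)\cap\mathrm{W}))$ directly, in the spirit of D.M. Kan's recognition theorem, letting conditions $(4)$ and $(5)$ play the role that the accessibility of $\mathrm{W}$ and the two out of six property play in Proposition 1.3. The limit--colimit axiom and the composition half of the two out of three axiom hold by hypothesis; the classes $\mathrm{cof}(I)$ and $\mathrm{inj}(\mathrm{cof}(I)\cap\mathrm{W})$ are retract-closed automatically, and retract-closedness of $\mathrm{W}$ will follow at the end, once both weak factorization systems are in hand, by the usual retract argument. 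By $(1)$ the small object argument applies to $I$ and to $J$, producing functorial factorizations and the weak factorization systems $(\mathrm{cof}(I),\mathrm{inj}(I))$ and $(\mathrm{cof}(J),\mathrm{inj}(J))$.

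First I would identify the trivial fibrations. By $(2)$, $\mathrm{inj}(I)\subseteq\mathrm{W}$, and trivially $\mathrm{inj}(I)\subseteq\mathrm{inj}(\mathrm{cof}(I)\cap\mathrm{W})$; conversely, a fibration $p$ that lies in $\mathrm{W}$ can be factored as $p=q\,\iota$ with $\iota\in\mathrm{cell}(I)$ and $q\in\mathrm{inj}(I)$, whence $\iota\in\mathrm{cof}(I)\cap\mathrm{W}$ by two out of three, $\iota$ lifts against $p$, and the retract argument gives $p\in\mathrm{inj}(I)$. Thus $\mathrm{inj}(I)$ is exactly the class of trivial fibrations, and the $(\mathrm{cof}(I),\mathrm{inj}(I))$-factorization supplies the cofibration/trivial-fibration factorization together with the lifting of cofibrations against trivial fibrations.

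The core of the proof is the trivial-cofibration/fibration weak factorization system. Here I would follow the fibrant-replacement technique of Cisinski and Joyal underlying Lemma 1.1, adapted so as to use $(5)$ in place of the two out of six property. The key assertion is that $\mathrm{inj}(J)\cap\mathrm{W}\subseteq\mathrm{inj}(I)$, a strengthening of $(4)$, which only grants this for naive fibrations between naively fibrant objects. To prove it, let $p\colon X\to Y$ be in $\mathrm{inj}(J)\cap\mathrm{W}$; factor $Y\to 1$ as $Y\xrightarrow{\beta}\bar Y\to 1$ with $\beta\in\mathrm{cell}(J)$ and $\bar Y$ naively fibrant, and factor $\beta p$ as $X\xrightarrow{\alpha}\bar X\xrightarrow{\bar p}\bar Y$ with $\alpha\in\mathrm{cell}(J)$ and $\bar p\in\mathrm{inj}(J)$. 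Then $\bar X$ is naively fibrant, $\bar p\in\mathrm{W}$ by two out of three, so $\bar p\in\mathrm{inj}_0(J)\cap\mathrm{W}\subseteq\mathrm{inj}(I)$ by $(4)$; that is, $\bar p$ is a trivial fibration. Form the pullback $P=\bar X\times_{\bar Y}Y$: the projection $P\to Y$, being a pullback of the trivial fibration $\bar p$, is a trivial fibration, while the projection $P\to\bar X$, being a pullback of $\beta\in\mathrm{W}$ along the map $\bar p\in\mathrm{inj}_0(J)$, lies in $\mathrm{W}$ by $(5)$. A diagram chase using two out of three and the retract argument then exhibits $p$ as a retract of the trivial fibration $P\to Y$, so $p\in\mathrm{inj}(I)$. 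Granting this, one obtains $\mathrm{cof}(I)\cap\mathrm{W}=\mathrm{cof}(J)$: the inclusion $\mathrm{cof}(J)\subseteq\mathrm{cof}(I)\cap\mathrm{W}$ is $(3)$ plus retract-closedness of $\mathrm{W}$, and conversely a map $j\in\mathrm{cof}(I)\cap\mathrm{W}$ factors as a map in $\mathrm{cell}(J)$ followed by a naive fibration which, being in $\mathrm{W}$ by two out of three, lies in $\mathrm{inj}(I)$ by the assertion just proved, so that $j$ is a retract of its $\mathrm{cell}(J)$-part and hence in $\mathrm{cof}(J)$. Consequently the $(\mathrm{cof}(J),\mathrm{inj}(J))$-factorization is the trivial-cofibration/fibration factorization, $\mathrm{inj}(\mathrm{cof}(I)\cap\mathrm{W})=\mathrm{inj}(J)$, and the remaining lifting axiom holds. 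The ``moreover'' assertions are then immediate: $X$ is fibrant iff $X\to 1\in\mathrm{inj}(J)$ iff $X$ is naively fibrant, and a map between fibrant objects is a fibration iff it is a naive fibration. Finally, right properness follows from $(5)$ together with the standard reduction of right properness to pullbacks of weak equivalences along fibrations between fibrant objects, since such fibrations are precisely the maps in $\mathrm{inj}_0(J)$.

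I expect the upgrade of $(4)$ to $\mathrm{inj}(J)\cap\mathrm{W}\subseteq\mathrm{inj}(I)$ to be the main obstacle: it is exactly there that the fibrant-replacement construction must be combined with two out of three and the pullback-stability hypothesis $(5)$ so as to compensate for the absence of accessibility and of the two out of six property, and one has to set up the pullback squares carefully so that the retract argument applies and no circularity infects the identification $\mathrm{cof}(I)\cap\mathrm{W}=\mathrm{cof}(J)$.
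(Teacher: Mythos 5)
The paper itself leaves this proof to the reader (attributing it to Bousfield's proof of \cite[Theorem 9.3]{Bo}), so what matters is whether your argument stands on its own, and it does not: the load-bearing step is your claimed upgrade of $(4)$ to ${\rm inj}(J)\cap{\rm W}\subseteq{\rm inj}(I)$, and the proof you sketch for it has a genuine gap. After forming $P=\bar X\times_{\bar Y}Y$ you correctly get that $\pi_Y\colon P\to Y$ is in ${\rm inj}(I)$ and that $\pi_{\bar X}\colon P\to\bar X$ is in ${\rm W}$ by $(5)$, hence that the comparison map $\gamma\colon X\to P$ (with $\pi_Y\gamma=p$) is in ${\rm W}$ by two out of three. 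But to ``exhibit $p$ as a retract of $P\to Y$'' you need a map $P\to X$ over $Y$ splitting $\gamma$, i.e.\ a lift in the square with left leg $\gamma$ and right leg $p$; that requires $\gamma$ to have the left lifting property with respect to $p$, and nothing gives this: $\gamma$ is only known to be in ${\rm W}$ (it is not in ${\rm cell}(J)$ or ${\rm cof}(J)$, and $p$ is only a naive fibration, not a fibration). So the retract argument does not apply. Moreover the claim itself overshoots the statement: if ${\rm inj}(J)\cap{\rm W}\subseteq{\rm inj}(I)$ held, your deduction ${\rm cof}(I)\cap{\rm W}={\rm cof}(J)$ and hence ${\rm inj}({\rm cof}(I)\cap{\rm W})={\rm inj}(J)$ would make $J$ a set of generating trivial cofibrations and identify \emph{all} fibrations with naive fibrations --- precisely the conclusion that hypothesis $(4)$, deliberately restricted to maps between naively fibrant objects, is designed to avoid, and which is not available in the intended applications (this is why the proposition, like Proposition 1.3, only describes the fibrations between fibrant objects). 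A smaller but real problem of the same kind: you invoke ``retract-closedness of ${\rm W}$'' to get ${\rm cof}(J)\subseteq{\rm cof}(I)\cap{\rm W}$ while promising to derive retract-closedness only at the end, which as written is circular.

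The pullback you build is the right ingredient, but it should be aimed at the factorization axiom rather than at upgrading naive fibrations. The Bousfield-style argument runs: first show ${\rm inj}_0(J)\subseteq{\rm inj}({\rm cof}(I)\cap{\rm W})$ by the relative factorization argument in the proof of Lemma 1.1(2), which uses only the two out of three property, $(3)$ and $(4)$ (factor the bottom of a lifting square into ${\rm cell}(J)$ followed by a naive fibration, then do the same for the map into the induced pullback; the resulting naive fibration lies between naively fibrant objects and is in ${\rm W}$, hence in ${\rm inj}(I)$ by $(4)$). Then, given an arbitrary $f\colon X\to Y$, take $Y\to\hat Y$ in ${\rm cell}(J)$ with $\hat Y$ naively fibrant, factor $X\to\hat Y$ as ${\rm cell}(J)$ followed by $q\in{\rm inj}_0(J)$, and pull $q$ back along $Y\to\hat Y$: the map $P\to Y$ is a fibration (pullback of a map in ${\rm inj}_0(J)\subseteq{\rm inj}({\rm cof}(I)\cap{\rm W})$), and $X\to P$ is in ${\rm W}$ by $(5)$ and two out of three; finally factor $X\to P$ as a map in ${\rm cell}(I)$ followed by a map in ${\rm inj}(I)$, so that by $(2)$ and two out of three the first factor is in ${\rm cof}(I)\cap{\rm W}$ and the remaining composite is a fibration. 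Together with your (correct) identification ${\rm inj}({\rm cof}(I)\cap{\rm W})\cap{\rm W}={\rm inj}(I)$, this yields both factorizations and the lifting axioms without ever identifying the fibrations with ${\rm inj}(J)$, and the ``moreover'' statements and right properness then follow as in Lemma 1.1(3) and from $(5)$ via \cite[Lemma 9.4]{Bo}.
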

Here is an application of Lemma 1.1. Let $\mathcal{E}$ 
be a locally presentable closed category with initial object
$\emptyset$. We denote by $\otimes$ the monoidal product 
of $\mathcal{E}$ and for two objects $X,Y$ of $\mathcal{E}$ 
we write $Y^{X}$ for their internal hom. In the language of 
Lemma 1.1 we have
\begin{proposition}
Let {\rm W} be a class of maps of $\mathcal{E}$ having 
the two out of six property and let $I$ and $J$
be two sets of maps of $\mathcal{E}$. Suppose that
the domains of the elements of $I$ are in {\rm cof}$(I)$,
that $cell(J)\subset {\rm cof}(I)\cap {\rm W}$ 
and that a map between naively fibrant objects which 
is both a naive fibration and in {\rm W} is in {\rm inj}$(I)$.
Then the following are equivalent:

$(a)$ for any maps $A\rightarrow B$ and $K\rightarrow L$ 
of {\rm cof}$(I)$, the canonical map 
$$A\otimes L\underset{A\otimes K}\cup B\otimes 
K\rightarrow B\otimes L$$ is in {\rm cof}$(I)$, which 
is in {\rm W} if either one of the given maps is in {\rm W};

$(b)$ for any maps $A\rightarrow B$ and $K\rightarrow L$ 
of {\rm cof}$(I)$, the canonical map 
$$A\otimes L\underset{A\otimes K}\cup B\otimes 
K\rightarrow B\otimes L$$ is in {\rm cof}$(I)$ \emph{and}
for every element $A\rightarrow B$ of $I$ and every naive 
fibration $X\rightarrow Y$ between naively fibrant objects, 
the canonical map $$X^{B}\rightarrow 
Y^{B}\times_{Y^{A}}X^{A}$$
is a naive fibration between naively fibrant objects.
\end{proposition}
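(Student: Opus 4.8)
The plan is to prove the equivalence $(a)\Leftrightarrow(b)$ by a standard adjunction argument on pushout-product and pullback-hom maps, using Lemma~1.1 to reduce statements about ``acyclic cofibrations'' to lifting statements against ${\rm inj}_0(J)$. First note that $(a)\Rightarrow(b)$ is almost immediate: the first clause of $(b)$ is literally the first clause of $(a)$, and for the second clause one observes that if $A\to B$ is in $I$ (hence in ${\rm cof}(I)$) and $X\to Y$ is a naive fibration between naively fibrant objects, then by the adjunction isomorphism the map $X^{B}\to Y^{B}\times_{Y^{A}}X^{A}$ has the right lifting property against a map $K\to L$ of ${\rm cof}(I)$ precisely when $X\to Y$ has the right lifting property against the pushout-product $K\otimes B\cup_{K\otimes A}L\otimes A\to L\otimes B$; by $(a)$ this pushout-product is in ${\rm cof}(I)$, and it is in ${\rm cof}(I)\cap{\rm W}$ if $K\to L$ is in ${\rm W}$. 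Since a naive fibration lifts against all of ${\rm cof}(J)\subset{\rm cof}(I)$ by $nc1$ and $cell(J)\subset{\rm cof}(I)$, one first checks $X^{B}\to Y^{B}\times_{Y^{A}}X^{A}$ is a naive fibration; that its source and target are naively fibrant follows by taking $K\to L$ to be $\emptyset\to$ (object), using that naively fibrant objects are closed under the relevant limits and that $(-)^{B}$ preserves naive fibrations with naively fibrant targets when $B\in{\rm cof}(I)$ — here I would invoke the hypothesis that domains of elements of $I$ lie in ${\rm cof}(I)$ to handle the case $B$ is such a domain, and bootstrap.

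For the converse $(b)\Rightarrow(a)$, the only thing to add to the first clause of $(b)$ is the acyclicity statement: if $A\to B$ and $K\to L$ are in ${\rm cof}(I)$ with, say, $K\to L$ in ${\rm W}$, then the pushout-product $P:=A\otimes L\cup_{A\otimes K}B\otimes K\to B\otimes L$ lies in ${\rm W}$. By Lemma~1.1(2), applied to the weak factorization system $(\mathcal{A},\mathcal{B})=({\rm cof}(I),{\rm inj}(I))$ — whose hypotheses $cell(J)\subset{\rm cof}(I)\cap{\rm W}$ and ${\rm inj}_0(J)\cap{\rm W}\subset{\rm inj}(I)$ are exactly what is assumed (the latter is the ``naive fibration between naively fibrant objects that is in ${\rm W}$ lies in ${\rm inj}(I)$'' hypothesis) — a map in ${\rm cof}(I)$ lies in ${\rm W}$ iff it has the left lifting property against every map in ${\rm inj}_0(J)$, i.e.\ against every naive fibration between naively fibrant objects. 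So it suffices to show $P$ lifts on the left against each such $p:X\to Y$. By the pushout-product/pullback-hom adjunction, this is equivalent to: the map $K\to L$ lifts on the left against the pullback-hom $X^{B}\to Y^{B}\times_{Y^{A}}X^{A}$. Now $A\to B$ is a transfinite composite of pushouts of maps in $I$ (as $A\to B$ is in ${\rm cof}(I)$), and the second clause of $(b)$ together with stability of ${\rm inj}_0(J)$ under the pullbacks and limits involved shows that for $A\to B$ in $I$ the pullback-hom $X^{B}\to Y^{B}\times_{Y^{A}}X^{A}$ is again a naive fibration between naively fibrant objects; a routine induction then gives the same for general $A\to B$ in ${\rm cof}(I)$. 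Hence $X^{B}\to Y^{B}\times_{Y^{A}}X^{A}$ is in ${\rm inj}_0(J)$, and since $K\to L$ is in ${\rm cof}(I)\cap{\rm W}$, Lemma~1.1(2) (now its ``only if'' direction) says $K\to L$ has the left lifting property against it. This yields the desired filler, so $P\in{\rm W}$. The symmetric argument handles the case $A\to B\in{\rm W}$.

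A technical point deserving care throughout is the passage from ``$A\to B\in I$'' to ``$A\to B\in{\rm cof}(I)$'' in the pullback-hom statements. For this I would record an auxiliary closure observation: the class of maps $q$ of $\mathcal{E}$ such that, for every $p\in{\rm inj}_0(J)$, the pullback-hom of $q$ against $p$ is again in ${\rm inj}_0(J)$, is closed under transfinite composition and cobase change, and contains the maps $\emptyset\to A$ for $A$ a domain or codomain of a map in $I$ — here one uses the hypothesis on domains of $I$, the fact that ${\rm inj}_0(J)$ is defined by a right lifting property (hence its underlying naive fibrations are closed under pullback, composition, and limits of towers), and the standard fact that naively fibrant objects are closed under those limits since $1$ is naively fibrant. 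Granting this observation, both implications go through.

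The main obstacle I anticipate is not the adjunction bookkeeping but verifying that ${\rm inj}_0(J)$ — fibrations \emph{between naively fibrant objects} — is stable under the pullback-hom construction, because a pullback-hom against a fibration need not have fibrant source or target unless one knows the exponents are cofibrant; that is exactly where the hypothesis ``domains of the elements of $I$ are in ${\rm cof}(I)$'' is forced to do its work, and getting the induction over the cellular presentation of $A\to B$ to respect the ``naively fibrant'' clause (and not merely the ``naive fibration'' clause) is the delicate part.
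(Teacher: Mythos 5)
Your $(b)\Rightarrow(a)$ argument has a genuine gap, and it is exactly at the point you flag as ``the delicate part''. You reduce the acyclicity of the pushout-product to showing that the pullback-hom $X^{B}\rightarrow Y^{B}\times_{Y^{A}}X^{A}$ lies in ${\rm inj}_{0}(J)$ for an \emph{arbitrary} map $A\rightarrow B$ of ${\rm cof}(I)$, and you propose to get this by induction over a cellular presentation, invoking an auxiliary closure observation: that the class of maps $q$ whose pullback-hom against every $p\in{\rm inj}_{0}(J)$ is again in ${\rm inj}_{0}(J)$ is closed under cobase change and transfinite composition. That observation is false as stated: the ``naive fibration'' clause does propagate, but the ``between naively fibrant objects'' clause does not. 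A cobase change of $q\colon A\rightarrow B$ is taken along an arbitrary attaching map with arbitrary target $A'$, and the source of the new pullback-hom involves $X^{A'}$ (e.g. $X^{B\cup_{A}A'}\cong X^{B}\times_{X^{A}}X^{A'}$); since nothing forces $\emptyset\rightarrow A'$ to be in ${\rm cof}(I)$ (the hypothesis only concerns domains of elements of $I$), there is no reason for $X^{A'}$, hence for the source or target of the new pullback-hom, to be naively fibrant. For the same reason the intermediate claim itself (pullback-hom in ${\rm inj}_{0}(J)$ for all of ${\rm cof}(I)$) is stronger than what the hypotheses can deliver, so the induction cannot be repaired in the form you set it up.

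The fix is to run the induction on the other side of the adjunction, which is what the paper does. Fix $K\rightarrow L$ in ${\rm cof}(I)\cap{\rm W}$ and consider the class of maps $A'\rightarrow B'$ in ${\rm cof}(I)$ whose pushout-product with $K\rightarrow L$ lies in ${\rm cof}(I)\cap{\rm W}$. By Lemma 1.1(2) applied to $({\rm cof}(I),{\rm inj}(I))$, membership of a map of ${\rm cof}(I)$ in ${\rm W}$ is detected by the left lifting property against the class ${\rm inj}_{0}(J)$, so this class of maps is closed under pushout, transfinite composition and retracts (your presentation of ${\rm cof}(I)$ also omitted retracts); and it contains $I$ by exactly your transposition argument, since for $A\rightarrow B\in I$ clause two of $(b)$ puts the pullback-hom in ${\rm inj}_{0}(J)$ and $K\rightarrow L$ lifts against it by Lemma 1.1(2). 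Thus the ${\rm inj}_{0}(J)$-membership of pullback-homs is only ever needed for elements of $I$, where $(b)$ supplies it. A smaller point in your $(a)\Rightarrow(b)$ direction: the justification ``a naive fibration lifts against ${\rm cof}(J)$'' is not the relevant fact, because the pushout-product of an element of $J$ with $A\rightarrow B$ lands in ${\rm cof}(I)\cap{\rm W}$, not in ${\rm cof}(J)$; what you need there is again Lemma 1.1(2), i.e. that maps of ${\rm cof}(I)\cap{\rm W}$ lift against ${\rm inj}_{0}(J)$, and likewise for the naive fibrancy of $X^{A}$, $X^{B}$, $Y^{B}\times_{Y^{A}}X^{A}$ (using $\emptyset\rightarrow A$, $\emptyset\rightarrow B\in{\rm cof}(I)$ from the hypothesis on domains); no bootstrap or closure observation is required there.
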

\begin{proof}
The fact that the domains of the elements of $I$ 
are in {\rm cof}$(I)$ means that for every element 
$A\rightarrow B$ of $I$, the map $\emptyset \to A$
is in {\rm cof}$(I)$ (and therefore so is $\emptyset \to B$).

We prove $(a)\Rightarrow (b)$. Let $A\rightarrow B$ 
be an element of $I$, $X\rightarrow Y$ a naive fibration
between naively fibrant objects and $C\rightarrow D$ 
an element of $J$. A commutative diagram
\[
\xymatrix{
C \ar[rr] \ar[d] & & X^{B} \ar[d]\\
D \ar[rr] & & Y^{B}\times_{Y^{A}}X^{A}\\
  }
  \]
has a diagonal filler if and only if its
adjoint transpose 
\[
   \xymatrix{
C\otimes B\underset{C\otimes A}\cup 
D\otimes A \ar[rr] \ar[d] & & X \ar[d]\\
D\otimes B \ar[rr] & & Y\\
  }
  \]
has one. The latter is true by Lemma 1.1(2)
applied to the weak factorization system 
$({\rm cof}(I),{\rm inj}(I))$. It follows that
$X^{B}\rightarrow Y^{B}\times_{Y^{A}}X^{A}$
is a naive fibration. A similar adjunction argument 
shows that $X^{A}\rightarrow Y^{A}$ and 
$X^{B}\rightarrow Y^{B}$ are naive fibrations
between naively fibrant objects, therefore
$Y^{B}\times_{Y^{A}}X^{A}$ is naively
fibrant.

We prove $(b)\Rightarrow (a)$. Suppose first 
that $A\rightarrow B$ is an element of $I$
and let $K\rightarrow L$ be a fixed map in
${\rm cof}(I)\cap {\rm W}$. Then the 
canonical map 
$$A\otimes L\underset{A\otimes K}\cup B\otimes 
K\rightarrow B\otimes L$$ is in {\rm W}
by Lemma 1.1(2) applied to the weak 
factorization system $({\rm cof}(I),{\rm inj}(I))$
and an adjunction argument. Thus,
it suffices to show that the class of maps
$A'\rightarrow B'$ of {\rm cof}$(I)$ such that 
$$A'\otimes L\underset{A'\otimes K}\cup 
B'\otimes K\rightarrow B'\otimes L$$ 
is in {\rm cof}$(I)\cap {\rm W}$ is closed 
under pushout, transfinite composition and
retracts. This is the case since by Lemma 1.1(2)
applied to the weak factorization system 
$({\rm cof}(I),{\rm inj}(I))$ the elements of
${\rm cof}(I)$ which are in {\rm W}
can be detected by the left lifting property 
with respect to a class of maps.
\end{proof}

\section{Application: categories enriched over 
monoidal simplicial model categories}

We denote by {\bf S} the category of simplicial 
sets, regarded as having the standard 
model structure (due to Quillen). We let 
{\bf Cat} be the category of small categories. 
We say that an arrow 
$f:C\rightarrow D$ of {\bf Cat} is an 
\emph{isofibration} if for any $x\in Ob(C)$ and any 
isomorphism $v:y'\rightarrow f(x)$ in $D$, there 
exists an isomorphism $u:x'\rightarrow x$ in $C$ 
such that $f(u)=v$. The class of isofibrations is 
invariant under isomorphisms in the sense that 
given a commutative diagram in {\bf Cat}
\[
\xymatrix{
{A}\ar[r] \ar[d]_{f}
&{B} \ar[d]^{g}\\
{C} \ar[r]
&{D}
}
\]
in which the horizontal arrows are isomorphisms,
the map $f$ is an isofibration if and only if $g$ is so.

\subsection{Monoidal simplicial model categories}

Let {\bf M} be a monoidal model category 
with cofibrant unit. We recall \cite[Definition 4.2.20]{Ho}
that {\bf M} is said to be a \emph{monoidal} 
{\bf S}-\emph{model category} if it is given
a Quillen pair $F:{\bf S}\rightleftarrows {\bf M}:G$ 
such that $F$ is strong monoidal. Since $F$
is strong monoidal, $G$ becomes a monoidal 
functor.

\subsection{Classes of {\bf M}-functors and the main result}

Let {\bf M} be a monoidal model category with cofibrant unit $e$.
We denote by {\bf M}-{\bf Cat} the category of small {\bf M}-categories.
If $S$ is a set, we denote by {\bf M}-{\bf Cat}$(S)$ 
(resp. {\bf M}-{\bf Graph}$(S)$) the category of small 
{\bf M}-categories (resp. {\bf M}-graphs) with fixed set of 
objects $S$. When $S$ is a one element set $\{\ast\}$,
{\bf M}-{\bf Cat}$(\{\ast\})$ is the category 
$Mon({\bf M})$ of monoids in {\bf M}.
There is a free-forgetful adjunction 
$$F_{S}:{\bf M}\text{-}{\bf Graph}(S)\rightleftarrows 
{\bf M}\text{-}{\bf Cat}(S):U_{S}$$
We denote by $\varepsilon^{S}$ the counit of this adjunction. 
Every function $f:S\rightarrow T$ induces an adjoint pair
$$f_{!}:{\bf M}\text{-}{\bf Cat}(S)\rightleftarrows 
{\bf M}\text{-}{\bf Cat}(T):f^{\ast}$$
If $\mathcal{K}$ is a class of maps of {\bf M}, 
an {\bf M}-functor $f:\mathcal{A}\rightarrow \mathcal{B}$ 
is said to be \emph{locally in} $\mathcal{K}$ if for each pair 
$x,y\in \mathcal{A}$ of objects, the map 
$f_{x,y}:\mathcal{A}(x,y)\rightarrow \mathcal{B}(f(x),f(y))$ 
is in $\mathcal{K}$.

We have a functor $[\_]_{{\bf M}}:{\bf M}\text{-}{\bf Cat} 
\rightarrow {\bf Cat}$ obtained by change of base along 
the symmetric monoidal composite functor
\[
   \xymatrix{
   {\bf M} \ar[r] & Ho({\bf M})
   \ar[rr]^{Ho({\bf M})(e,-)} & & Set\\
}
\]
\begin{definition} Let $f:\mathcal{A}\rightarrow \mathcal{B}$ 
be a morphism in {\bf M}\text{-}{\bf Cat}.

1. The morphism $f$ is a $\mathrm{DK}$-\emph{equivalence} if
$f$ is locally a weak equivalence of {\bf M} and
$[f]_{{\bf M}}:[\mathcal{A}]_{{\bf M}}\rightarrow [\mathcal{B}]_{{\bf M}}$
is essentially surjective.

2. The morphism $f$ is a $\mathrm{DK}$-\emph{fibration} 
if $f$ is locally a fibration of {\bf M} and $[f]_{{\bf M}}$ 
is an isofibration.

3. The morphism $f$ is called a \emph{trivial fibration} 
if it is both a DK-equivalence and a DK-fibration.

4. The morphism $f$ is called a \emph{cofibration} 
if it has the left lifting property with respect to the trivial 
fibrations.
\end{definition}
It follows from Definition 2.1 that $(a)$ an {\bf M}-functor
$f$ is a DK-equivalence if and only if $Ho(f)$ is  an equivalence 
of $Ho({\bf M})$-categories, and $(b)$ an {\bf M}-functor is a
trivial fibration if and only if it is surjective on objects and 
locally a trivial fibration of {\bf M}. In particular, the
class of DK-equivalences has the two out of three 
and weak invertibility properties.

We denote by $\mathcal{I}$ the {\bf M}-category with a single object $\ast$
and $\mathcal{I}(\ast,\ast)=e$. For an object $X$ of {\bf M} we denote by 
$2_{X}$ the {\bf M}-category with two objects 0 and 1 and with 
$2_{X}(0,0)=2_{X}(1,1)=e$, $2_{X}(0,1)=X$ and $2_{X}(1,0)=\emptyset$.
When {\bf M} is cofibrantly generated, an {\bf M}-functor is a trivial 
fibration if and only if it has the right lifting property with respect to the
saturated class generated by $\{\emptyset\rightarrow \mathcal{I}\}\cup 
\{2_{X}\overset{2_{i}} \rightarrow 2_{Y}$, $i$ generating cofibration  
of {\bf M}\}, where $\emptyset$ denotes the initial object of 
{\bf M}\text{-}{\bf Cat}. We have the following fundamental 
result of J. Bergner.
\begin{theorem} \cite{Be} The category
{\bf S}\text{-}{\bf Cat} of simplicial categories admits a cofibrantly
generated model structure in which the weak equivalences are the
DK-equivalences and the fibrations are the DK-fibrations. A
generating set of trivial cofibrations consists of

(B1) $\{2_{X}\overset{2_{j}} \longrightarrow 2_{Y}\}$, 
where $j$ is a horn inclusion, and

(B2) inclusions $\mathcal{I} \overset{\delta_{y}} \rightarrow
\mathcal{H}$, where $\{\mathcal{H}\}$ is a set of representatives
for the isomorphism classes of simplicial categories on two objects
which have  countably many simplices in each function complex.
Furthermore, each such $\mathcal{H}$ is required to be cofibrant 
and weakly contractible in ${\bf S}\text{-}{\bf Cat}(\{x,y\})$. 
Here $\{x,y\}$ is the set with elements $x$ and $y$ and 
$\delta_{y}$ omits $y$.
\end{theorem}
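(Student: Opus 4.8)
The plan is to verify the hypotheses of the recognition theorem of Kan (\cite[Theorem~11.3.1]{Hi}, \cite[Theorem~2.1.19]{Ho}) for ${\bf S}\text{-}{\bf Cat}$, with weak equivalences the DK-equivalences, with generating set of cofibrations
\[
I=\{\emptyset\to\mathcal{I}\}\cup\{2_{\partial\Delta^{n}}\overset{2_{i_{n}}}{\longrightarrow}2_{\Delta^{n}}:n\geq 0\}
\]
(so that, by the description of trivial fibrations preceding the theorem, ${\rm inj}(I)$ is precisely the class of trivial fibrations), and with generating set of trivial cofibrations $J$ the union of the families (B1) and (B2). The category ${\bf S}\text{-}{\bf Cat}$ is locally presentable, so $I$ and $J$ permit the small object argument and every smallness hypothesis is automatic; the class ${\rm W}$ of DK-equivalences has the two out of three and weak invertibility properties (noted above, via $Ho(-)$) and is closed under retracts. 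The two conditions that require work are
\[
cell(J)\subset{\rm cof}(I)\cap{\rm W}\qquad\text{and}\qquad{\rm inj}(I)\subset{\rm W}\cap{\rm inj}(J).
\]

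For the first, one checks $J\subset{\rm cof}(I)$ and that pushouts and transfinite composites of maps in $J$ are DK-equivalences. Membership in ${\rm cof}(I)$ is routine: a map $2_{j}$ with $j$ a horn inclusion lies in ${\rm cof}(I)$ by the usual cell-induction argument (the functor $X\mapsto 2_{X}$ commuting with the colimits that build $j$ from the boundary inclusions), and $\mathcal{I}\overset{\delta_{y}}{\longrightarrow}\mathcal{H}$ is a cofibration because $\mathcal{H}$ is cofibrant in ${\bf S}\text{-}{\bf Cat}(\{x,y\})$ and $\delta_{y}$ is a pushout of $\emptyset\to\mathcal{I}$ followed by a cofibration with fixed object set. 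By a filtered-colimit argument it then suffices, for membership in ${\rm W}$, to treat a single pushout of a map in $J$. A pushout of $\mathcal{I}\overset{\delta_{y}}{\longrightarrow}\mathcal{H}$ along $\mathcal{I}\to\mathcal{C}$ glues $\mathcal{H}$ onto $\mathcal{C}$ at a chosen object; since $\mathcal{H}$ is weakly contractible in ${\bf S}\text{-}{\bf Cat}(\{x,y\})$, the adjoined object becomes isomorphic to the image of the chosen one in the homotopy category and $\mathcal{C}\to\mathcal{C}\cup_{\mathcal{I}}\mathcal{H}$ is locally a weak equivalence, hence a DK-equivalence. The genuinely non-formal point is a pushout of $2_{j}$ along an arbitrary ${\bf S}$-functor $2_{\Lambda^{n}_{k}}\to\mathcal{C}$: the object set is unchanged, but $\mathcal{C}\cup_{2_{\Lambda^{n}_{k}}}2_{\Delta^{n}}$ is \emph{not} computed hom-wise, because coproducts and pushouts of ${\bf S}$-categories introduce freely composable strings of arrows. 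One presents the pushout as the colimit of an increasing filtration indexed by the length of such strings, checks that each stage alters the relevant function complexes by a pushout along a trivial cofibration of ${\bf S}$ obtained from $j$ by products with cofibrant simplicial sets, and concludes that $\mathcal{C}\to\mathcal{C}\cup_{2_{\Lambda^{n}_{k}}}2_{\Delta^{n}}$ is locally a trivial cofibration, hence a DK-equivalence.

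For the second condition, the observation $(b)$ following Definition~2.1 shows that a map in ${\rm inj}(I)$ is surjective on objects and locally a trivial fibration of ${\bf S}$, hence locally a weak equivalence with $[f]_{{\bf S}}$ essentially surjective, i.e.\ a DK-equivalence; so ${\rm inj}(I)\subset{\rm W}$. That ${\rm inj}(I)\subset{\rm inj}(J)$ is checked against each generator of $J$: against $2_{j}$ it is immediate, a local trivial fibration of ${\bf S}$ having the local right lifting property against horn inclusions; against $\mathcal{I}\overset{\delta_{y}}{\longrightarrow}\mathcal{H}$ one extends given maps $\mathcal{I}\to X$ and $\mathcal{I}\to Y$ to a diagonal $\mathcal{H}\to X$, using that $f\in{\rm inj}(I)$ is surjective on objects and locally a trivial fibration of ${\bf S}$ and that $\mathcal{H}$ is cofibrant in ${\bf S}\text{-}{\bf Cat}(\{x,y\})$. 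Kan's theorem then produces the model structure, with cofibrations ${\rm cof}(I)$ and fibrations ${\rm inj}(J)$; the latter coincide with the DK-fibrations because the right lifting property against (B1) makes $f$ locally a Kan fibration and the right lifting property against (B2) makes $[f]_{{\bf S}}$ an isofibration (by lifting equivalences), and conversely a DK-fibration lifts against (B1) and (B2).

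Underlying everything is the fact that the trivial cofibrations are generated by a \emph{set}: this is why (B2) is taken to be a set of representatives of the \emph{countably generated} cofibrant weakly contractible ${\bf S}$-categories on two objects, and to justify it one must show that an arbitrary cofibrant weakly contractible ${\bf S}$-category on two objects is built from countable ones by the colimit operations occurring in $cell(J)$---equivalently that the DK-equivalences form an accessible subcategory of ${\rm Mor}({\bf S}\text{-}{\bf Cat})$---which is a L\"owenheim--Skolem type argument. I expect the main obstacle to be exactly this set-theoretic step together with the filtration analysis of the pushouts of the maps $2_{j}$; the remaining items---smallness, two out of three, retracts, the description of ${\rm inj}(I)$, and the $2_{j}$-part of ${\rm inj}(I)\subset{\rm inj}(J)$---are routine. (Alternatively, Proposition~1.3 applied to these $I$ and $J$ yields the model structure together with the identification of its fibrant objects and of the fibrations between them, the hypothesis $nc1$ being the first condition above and $nc2$ a variant of the second; Kan's theorem is needed only to obtain the description of \emph{all} the fibrations.)
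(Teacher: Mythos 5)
First, note that the paper does not prove this statement at all: Theorem~2.2 is quoted verbatim from Bergner's paper \cite{Be} and is used as an imported black box (in fact only a relaxed form of \cite[Proposition~2.3]{Be} is ever used, cf.\ Remark~2.7), so there is no internal proof to compare yours against. Your strategy---Kan's recognition theorem \cite[Theorem~11.3.1]{Hi} with $I=\{\emptyset\to\mathcal{I}\}\cup\{2_{\partial\Delta^{n}}\to 2_{\Delta^{n}}\}$ and $J=(B1)\cup(B2)$---is exactly Bergner's own, so the plan is the right one. But as it stands your text is a plan, not a proof: every step you label ``routine'' is indeed routine, while the three steps that constitute the actual content of \cite{Be} are only named, not carried out. (i) For pushouts along $2_{j}$ you assert the existence of a filtration by lengths of free composites and that each stage is a pushout along a trivial cofibration of ${\bf S}$; this is the Dwyer--Kan/Schwede--Shipley style analysis (the paper invokes \cite[Proof of Proposition 6.3(1)]{SS2} for the analogous claim in ${\bf M}$-${\bf Cat}$) and it is precisely what has to be written down. (ii) Your treatment of pushouts along $\delta_{y}:\mathcal{I}\to\mathcal{H}$ is too quick and, as phrased, misleading: the pushout does not merely ``glue $\mathcal{H}$ onto $\mathcal{C}$ at an object''---the hom-complexes \emph{between old objects of $\mathcal{C}$} change, since new morphisms arise as composites through the adjoined object $y$. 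Showing that $\mathcal{C}(c,c')\to(\mathcal{C}\cup_{\mathcal{I}}\mathcal{H})(c,c')$ is a weak equivalence needs an argument; in the present paper (Step~3 of the proof of Theorem~2.3) this is done by factoring $\delta_{y}$ through the full subcategory $\mathcal{H}'$ on $x$, using cofibrancy of $\mathcal{H}(x,x)$ as a simplicial monoid (Lemmas~2.4--2.5) and the full-and-faithful pushout result (Proposition~3.1); weak contractibility of $\mathcal{H}$ alone does not do it.

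(iii) The identification of ${\rm inj}(J)$ with the DK-fibrations is the genuinely non-formal heart of Bergner's paper, and ``the right lifting property against (B2) makes $[f]_{{\bf S}}$ an isofibration (by lifting equivalences)'' begs exactly the question: one must show that any homotopy equivalence in the target can be encoded by a map from some member $\mathcal{H}$ of the chosen \emph{set} (B2), i.e.\ by a cofibrant, weakly contractible simplicial category on two objects with countable function complexes, and conversely that DK-fibrations lift against all such $\mathcal{H}$. This cardinality/interpolation argument (Bergner's Proposition~2.3 and the lemmas feeding into it) is not the same thing as accessibility of the class of DK-equivalences, which is what your L\"owenheim--Skolem remark gestures at; accessibility of ${\rm W}$ is what Smith-type arguments need, not what makes (B2) detect isofibrations. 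Finally, your parenthetical alternative via Proposition~1.3 does not avoid this point either: its hypothesis $nc2$ is verified in the paper precisely by appealing to the relaxed \cite[Proposition~2.3]{Be}, and, as Remark~2.7 notes, that route only describes fibrant objects and fibrations between them, so recovering the statement that \emph{all} fibrations are DK-fibrations needs the extra input (e.g.\ a monoidal fibrant replacement in ${\bf S}$ preserving fibrations). So: right skeleton, but the three load-bearing arguments are missing.
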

Recall from \cite[Definition 3.3]{SS1} the monoid 
axiom. The main result of this section is
\begin{theorem}
Let {\bf M} be a cofibrantly generated monoidal 
{\bf S}-model category having cofibrant unit 
and which satisfies the monoid axiom. Suppose 
furthermore that {\bf M} is locally presentable
and that a transfinite composition of weak 
equivalences of {\bf M} is a weak equivalence.

Then {\bf M}\text{-}{\bf Cat} admits a 
cofibrantly generated model category
structure in which the weak equivalences 
are the DK-equivalences, the cofibrations 
are the elements of 
{\rm cof}$(\{\emptyset\rightarrow \mathcal{I}\}\cup 
\{2_{X}\overset{2_{i}} \rightarrow 2_{Y}$, $i$ 
generating cofibration  of {\bf M}\}{\rm )},
the fibrant objects are the locally fibrant 
{\bf M}-categories and the fibrations
between fibrant objects are the DK-fibrations. 

If the model structure on {\bf M} is right proper, 
then so is the one on {\bf M}\text{-}{\bf Cat}.
\end{theorem}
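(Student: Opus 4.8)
The plan is to verify the hypotheses of Proposition 1.3, applied to $\mathcal{E} = {\bf M}\text{-}{\bf Cat}$, with ${\rm W}$ the class of DK-equivalences, $I$ the set $\{\emptyset\rightarrow \mathcal{I}\}\cup \{2_{X}\overset{2_{i}}\rightarrow 2_{Y}\}$ indexed by generating cofibrations $i$ of ${\bf M}$, and $J$ an appropriate set of ``trivial cofibration'' type maps whose cellular maps land in ${\rm cof}(I)\cap{\rm W}$. Local presentability of ${\bf M}\text{-}{\bf Cat}$ follows from local presentability of ${\bf M}$ by standard arguments; accessibility of ${\rm W}$ as a full subcategory of $\mathrm{Mor}$ likewise follows since the defining conditions (locally a weak equivalence of ${\bf M}$, plus essential surjectivity of $[f]_{\bf M}$) are accessibly definable. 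Conditions $c0$ and $nc0$ (two out of three and weak invertibility for ${\rm W}$) were already recorded in the discussion after Definition 2.1, and the characterization there of trivial fibrations as maps with the right lifting property against the saturated class generated by $I$ gives ${\rm inj}(I) = $ trivial fibrations $\subset {\rm W}$, which is $c1$; it also shows that a map in ${\rm inj}(I)$ between naively fibrant objects is a DK-fibration, and conversely, which is the content needed for $nc2$.

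The substantive work is choosing $J$ and proving $nc1$: $cell(J)\subset {\rm cof}(I)\cap {\rm W}$. The idea, following Tabuada and Bergner, is to take $J$ to consist of (B1)-type maps $\{2_{X}\overset{2_{j}}\rightarrow 2_{Y}\}$ where $j$ runs over generating trivial cofibrations of ${\bf M}$ (these are locally trivial cofibrations, hence in ${\rm cof}(I)\cap{\rm W}$, and the monoid axiom plus the transfinite-composition-of-weak-equivalences hypothesis ensure that pushouts and transfinite composites of pushouts of the underlying-graph maps remain weak equivalences on each hom-object, so that cellular maps built from them are in ${\rm W}$), together with an enriched analogue of Bergner's (B2) maps $\mathcal{I}\overset{\delta_y}{\rightarrow}\mathcal{H}$. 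Here is precisely the step that I expect to be the main obstacle: one must produce, over an arbitrary such ${\bf M}$, a \emph{set} of cofibrant, ``weakly contractible'' ${\bf M}$-categories $\mathcal{H}$ on two objects such that adjoining them suffices to detect DK-equivalences that are naive fibrations between naively fibrant objects, and such that the free ${\bf M}$-category maps $F_{S}$ applied to them, and their pushouts, stay inside ${\rm cof}(I)\cap{\rm W}$. Bergner constructs these for ${\bf S}$ by a countability/smallness argument on simplices; the ${\bf M}$-enriched version needs a presentability-based replacement. This is where the paper says it ``modifies one of the steps in Bergner's proof''; the modification presumably is to use the monoidal ${\bf S}$-model structure to transport Bergner's ${\bf S}$-categorical input $\mathcal{H}$ along $F:{\bf S}\rightarrow{\bf M}$ and then take a cofibrant replacement in ${\bf M}\text{-}{\bf Cat}(\{x,y\})$, invoking Theorem 2.2 and the result of Section 3 (the Fritsch--Latch-type statement) to control the homotopy type.

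Once $J$ is in hand and $nc1$ is established, I would verify $nc2$ directly: given a naive fibration $f:X\to Y$ between naively fibrant (i.e., locally fibrant) ${\bf M}$-categories that is also a DK-equivalence, I must show $f\in{\rm inj}(I)$, i.e., $f$ is a trivial fibration, i.e., surjective on objects and locally a trivial fibration of ${\bf M}$. Local triviality is automatic from ``locally a fibration'' plus ``locally a weak equivalence'' in the model category ${\bf M}$. Surjectivity on objects is where ${\rm inj}(J)$ — in particular the $\mathcal{I}\to\mathcal{H}$ maps — must do their job: a lifting against $\mathcal{I}\to\mathcal{H}$ for suitable contractible $\mathcal{H}$ forces isomorphisms in the homotopy category to lift, and combined with the DK-equivalence hypothesis (so $[f]_{\bf M}$ is essentially surjective and full) this upgrades ``naive fibration + DK-equivalence'' to surjectivity on the nose. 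This is essentially Bergner's argument that a DK-equivalence and DK-fibration is a trivial fibration, enriched.

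With Proposition 1.3 applicable, its conclusion gives exactly the stated model structure: weak equivalences the DK-equivalences, cofibrations ${\rm cof}(I)$, fibrations ${\rm inj}({\rm cof}(I)\cap{\rm W})$, with fibrant objects the naively fibrant — i.e., locally fibrant — ${\bf M}$-categories and the fibrations between fibrant objects the naive fibrations, which by the characterization after Definition 2.1 are precisely the DK-fibrations between fibrant objects. Cofibrant generation is part of the package, with generating cofibrations $I$ and generating trivial cofibrations $J$. For the last sentence: if ${\bf M}$ is right proper, I would deduce right properness of ${\bf M}\text{-}{\bf Cat}$ by checking the hypotheses of Proposition 1.4 instead (or in addition), the key extra point being that DK-equivalences are stable under pullback along naive fibrations — which reduces, using that pullbacks in ${\bf M}\text{-}{\bf Cat}$ are computed on objects and hom-objects, to right properness of ${\bf M}$ on each hom-object together with a check that essential surjectivity is preserved, the latter following from the isofibration condition built into DK-fibrations.
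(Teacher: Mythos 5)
Your framework is the paper's: Proposition 1.3 with the same $\mathcal{E}$, ${\rm W}$, $I$, and with $J$ consisting of the maps $2_{X}\rightarrow 2_{Y}$ for $j$ a generating trivial cofibration together with Bergner's (B2) maps transported along $F'$. But the two substantive verifications are not actually carried out, and the first of them you yourself flag as ``the main obstacle.'' For $nc1$ on the transported (B2) maps, your guess (transport $\mathcal{H}$ along $F$ and take a cofibrant replacement in ${\bf M}\text{-}{\bf Cat}(\{x,y\})$, then ``control the homotopy type'') is not the argument and does not by itself show that a pushout of $F'\delta_{y}$ along an arbitrary ${\bf M}$-functor $a:F'\mathcal{I}\rightarrow\mathcal{A}$ is a DK-equivalence. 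The paper's resolution takes $J=F'(B2)\cup\{2_{X}\rightarrow 2_{Y}\}$ with no replacement, factors $\delta_{y}$ as $\mathcal{I}\rightarrow\mathcal{H}'\rightarrow\mathcal{H}$ where $\mathcal{H}'$ is the full simplicial subcategory on $x$, and takes consecutive pushouts: Lemma 2.4 (via Dwyer--Kan free categories and Lemma 2.5) shows $\mathcal{H}'=\mathcal{H}(x,x)$ is a cofibrant simplicial monoid, so $\mathcal{I}\rightarrow\mathcal{H}'$ is a trivial cofibration of simplicial monoids, $F'\delta_{y}'$ is a trivial cofibration of monoids in ${\bf M}$ (this is where the monoid axiom enters), and its pushout, computed through the left Quillen functor $a_{!}:Mon({\bf M})\rightarrow{\bf M}\text{-}{\bf Cat}(Ob(\mathcal{A}))$, is locally a weak equivalence; the second pushout is along the full and faithful $F'\mathcal{H}'\rightarrow F'\mathcal{H}$ and is handled by Proposition 3.1; essential surjectivity then follows by applying $[\_]_{{\bf M}}$. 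Without an argument of this kind (plus the hypothesis on transfinite compositions of weak equivalences to pass from pushouts to $cell(J)$), $nc1$ is simply unproved in your proposal.

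The second gap is $nc2$ and the identification of naive fibrations between fibrant objects with DK-fibrations. Your final paragraph asserts this follows ``by the characterization after Definition 2.1,'' but that characterization concerns trivial fibrations, not DK-fibrations; and your earlier plan to run ``Bergner's argument, enriched'' is exactly what is not available over a general ${\bf M}$ --- Bergner's Proposition 2.3 is a non-formal statement about simplicial sets. The paper's device, which is the reason for putting $F'(B2)$ into $J$ in the first place, is to transfer along $G'$: for locally fibrant ${\bf M}$-categories one has a natural isomorphism $[\_]_{{\bf S}}G'\cong[\_]_{{\bf M}}$ (using the cofibrant unit, $e\cong F1$, and \cite[Proposition 8.5.16]{Hi}); a naive fibration $f$ between locally fibrant ${\bf M}$-categories gives, by adjunction, a locally Kan fibration $G'(f)$ between Kan-enriched categories with the right lifting property against B2, so the relaxed form of Bergner's result makes $[G'f]_{{\bf S}}$, hence $[f]_{{\bf M}}$, an isofibration; combined with ``locally a fibration'' this gives DK-fibration, and then DK-fibration plus DK-equivalence equals trivial fibration equals ${\rm inj}(I)$, which is $nc2$. (Also note the small point that ``naively fibrant iff locally fibrant'' uses that each $\delta_{y}$ admits a retraction.) Your treatment of right properness via pullback stability of DK-equivalences along naive fibrations between fibrant objects is in the right spirit --- the paper does it after the fact with \cite[Lemma 9.4]{Bo} rather than through Proposition 1.4 --- but the two gaps above are the core of the theorem and are not closed by the proposal.
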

\begin{proof}
We shall apply Theorem 0.1 via Proposition 1.3. 
We take $\mathcal{E}$ to be {\bf M}\text{-}{\bf Cat}
and {\rm W} to be the class of DK-equivalences.
The fact that {\bf M}\text{-}{\bf Cat} is locally 
presentable can be seen in a few ways, one is 
presented in \cite{KL}. The fact that the class of 
DK-equivalences is accessible follows essentially 
from the fact that the classes of weak equivalences 
of {\bf M} and of essentially surjective functors 
are accessible. We take $I$ to be the set 
$\{\emptyset\rightarrow \mathcal{I}\}\cup \{2_{X}
\overset{2_{i}} \rightarrow 2_{Y}$, 
$i$ generating cofibration of {\bf M}\}.
Let $$F:{\bf S}\rightleftarrows {\bf M}:G$$ be the 
Quillen pair guaranteed by the definition. $(F,G)$ 
induces adjoint pairs $$F':{\bf S}\text{-}{\bf Cat} 
\rightleftarrows {\bf M}\text{-}{\bf Cat}:G'$$ 
and $$F':{\bf S}\text{-}{\bf Cat}(S) 
\rightleftarrows {\bf M}\text{-}{\bf Cat}(S):G'$$ 
for every set $S$. The first $G'$ functor preserves 
trivial fibrations and the {\bf M}-functors
which are locally a fibration. The latter adjoint 
pair is a Quillen pair. Finally, we take $J$ to be the set 
$F'(B2)\cup \{2_{X}\overset{2_{i}} \rightarrow 2_{Y}$, 
$i$ generating trivial cofibration  of {\bf M}\},
where $B2$ is as in Theorem 2.2.

\emph{Step 1}. Conditions $c0, c1$ and $nc0$ 
from Proposition 1.3 were dealt with above.

\emph{Step 2}. Since every map $\delta_{y}$ 
belonging to the set B2 from Theorem 2.2 
has a retraction, one readily checks that an 
{\bf M}-category is naively fibrant if and 
only if it is locally fibrant. We claim
that if an {\bf M}-functor between locally 
fibrant {\bf M}-categories is a naive 
fibration, then it is a DK-fibration.
To see this, let first {\bf M}\text{-}{\bf Cat}$_{f}$ 
be the full subcategory of {\bf M}\text{-}{\bf Cat} 
consisting of the locally fibrant {\bf M}-categories. 
By \cite[Proposition 8.5.16]{Hi} 
we have a natural isomorphism of functors
$$\eta:[\_]_{{\bf S}}G'\cong [\_]_{{\bf M}}:
{\bf M}\text{-}{\bf Cat}_{f}\rightarrow {\bf Cat}$$
such that for all $\mathcal{A} \in 
{\bf M}\text{-}{\bf Cat}_{f}$, $\eta_{\mathcal{A}}$
is the identity on objects: indeed, for each pair 
$x,y\in \mathcal{A}$ of objects we 
have natural isomorphisms 
$$[\mathcal{A}]_{{\bf M}}(x,y)\cong
Ho({\bf M})(e,\mathcal{A}(x,y))\cong
Ho({\bf M})(F1,\mathcal{A}(x,y))\cong
Ho({\bf S})(1,G\mathcal{A}(x,y))\cong
[G'\mathcal{A}]_{{\bf S}}(x,y)$$
Second, we use the following 
relaxed version of \cite[Proposition 2.3]{Be}.
Let $f$ be a simplicial functor between categories 
enriched in Kan complexes such that $f$ is locally 
a Kan fibration. If $f$ has the right lifting property 
with respect to every element of the set B2, 
then $f$ is a DK-fibration.
(This is the only fact from \cite{Be} that we need.)
These facts, together with the observation that 
the class of isofibrations is invariant in {\bf Cat}
under isomorphisms, imply the claim. It is now clear 
that condition $nc2$ from Proposition 1.3 holds.

\emph{Step 3}. We check condition $nc1$ 
from Proposition 1.3. Let $j:X\rightarrow Y$ be a 
trivial cofibration of {\bf M}. We show that
for every {\bf M}-category $\mathcal{A}$, in 
the pushout diagram
\[
   \xymatrix{
2_{X} \ar[r]^{2_{j}} \ar[d] & 2_{Y} \ar[d]\\
\mathcal{A} \ar[r] & \mathcal{B}\\
}
   \]
the map $\mathcal{A}\rightarrow \mathcal{B}$
is a DK-equivalence. Let $S=Ob(\mathcal{A})$. 
This pushout can be calculated as the pushout 
\[
   \xymatrix{
F_{S}U_{S}\mathcal{A} \ar[r] 
\ar[d]^{\varepsilon^{S}_{\mathcal{A}}}
 & F_{S}\mathcal{X} \ar[d]\\
\mathcal{A} \ar[r] & \mathcal{B}\\
}
   \]
where $U_{S}\mathcal{A}\rightarrow \mathcal{X}$ 
is a certain  map of {\bf M}-graphs with fixed set of 
objects $S$. But then the map $\mathcal{A}
\rightarrow \mathcal{B}$ is known to be locally 
a weak equivalence of {\bf M}, see 
\cite[Proof of Proposition 6.3(1)]{SS2}. 

We now claim that if 
$\delta_{y}:\mathcal{I}\rightarrow \mathcal{H}$ 
is a map belonging to the set B2 from Theorem 2.2 
and $\mathcal{A}$ is any {\bf M}-category, 
then in the pushout diagram
\[
\xymatrix{
F'\mathcal{I} \ar[r]^{a} \ar[d]_{F'\delta_{y}} & \mathcal{A} \ar[d]\\
F'\mathcal{H} \ar[r] & \mathcal{B}\\
}
  \]
the map $\mathcal{A}\rightarrow \mathcal{B}$ is a 
DK-equivalence. We factorize the map $\delta_{y}$ as $\mathcal{I}
\overset{\delta_{y}'} \longrightarrow \mathcal{H}'
\rightarrow \mathcal{H}$, where the simplicial category 
$\mathcal{H}'$ has $\{x\}$ as set of objects and 
$\mathcal{H}'(x,x)=\mathcal{H}(x,x)$,
and then we take consecutive pushouts:
\[
\xymatrix{
F'\mathcal{I} \ar[r]^{a} \ar[d]_{F'\delta_{y}'} & \mathcal{A} \ar[d]^{j}\\
F'\mathcal{H}' \ar[d] \ar[r] & \mathcal{A'} \ar[d]\\
F'\mathcal{H} \ar[r] & \mathcal{B}\\
}
  \]
The map $j$ can be obtained
from the pushout diagram in 
${\bf M}\text{-}{\bf Cat}(Ob(\mathcal{A}))$
\[
   \xymatrix{
a_{!}F'\mathcal{I} \ar[r] \ar[d]_{a_{!}F'\delta_{y}'} 
& \mathcal{A} \ar[d]^{j}\\
a_{!}F'\mathcal{H}' \ar[r] & \mathcal{A}'\\
}
   \]
where $a_{!}:Mon({\bf M})\rightarrow 
{\bf M}\text{-}{\bf Cat}(Ob(\mathcal{A}))$.
By Lemma 2.4 the map $\delta_{y}'$ is a trivial 
cofibration in the category of simplicial monoids,
therefore $F'\delta_{y}'$ is a trivial cofibration
in the category of monoids in {\bf M}. Since
$a_{!}$ is a left Quillen functor, $j$ is a
trivial cofibration in ${\bf M}\text{-}{\bf Cat}(Ob(\mathcal{A}))$. 

The map $F'\mathcal{H}'\rightarrow F'\mathcal{H}$
is a full and faithful inclusion, so by Proposition 3.1 
the map $\mathcal{A}'\rightarrow \mathcal{B}$ 
is a full and faithful inclusion. Therefore 
the map $\mathcal{A}\rightarrow \mathcal{B}$ 
is locally a weak equivalence of {\bf M}. Applying 
the functor $[\_]_{{\bf M}}$ to the diagram 
\[
\xymatrix{
F'\mathcal{I} \ar[r]^{a} \ar[d]_{F'\delta_{y}} & \mathcal{A} \ar[d]\\
F'\mathcal{H} \ar[r] & \mathcal{B}\\
}
  \]
and taking into account that $F'$ preserves DK-equivalences
and that $Ob(\mathcal{B})=Ob(\mathcal{A})\cup \{\ast\}$, 
it follows that $\mathcal{A}\rightarrow \mathcal{B}$ 
is a DK-equivalence as well. The claim is proved.

So far we have shown that the pushout of a map
from $J$ along any {\bf M}-functor
is in ${\rm cof}(I)\cap {\rm W}$. Since a
transfinite composition of weak equivalences 
of {\bf M} is a weak equivalence, we readily obtain 
that $cell(J)\subset {\rm cof}(I)\cap {\rm W}$. Thus,
condition $nc1$ is checked.

Now, putting all the three steps together we obtain 
the desired model structure on {\bf M}\text{-}{\bf Cat}.

\emph{Step 4}. Suppose that {\bf M} is right proper. Using the 
explicit construction of pullbacks in {\bf M}\text{-}{\bf Cat}, 
the description of the fibrations between fibrant objects 
and \cite[Lemma 9.4]{Bo}, we conclude that the model
structure on {\bf M}\text{-}{\bf Cat} is right proper.
\end{proof}
\begin{lem}
Let $\mathcal{A}$ be a cofibrant simplicial category. Then 
for each $a\in Ob(\mathcal{A})$ the simplicial monoid 
$a^{\ast}\mathcal{A}=\mathcal{A}(a,a)$ is cofibrant.
\end{lem}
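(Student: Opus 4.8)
The plan is to prove the statement in two moves: first pass from the model structure on ${\bf S}\text{-}{\bf Cat}$ to the one on ${\bf S}\text{-}{\bf Cat}(Ob(\mathcal{A}))$, and then invoke the explicit description of the cofibrant objects of the latter. For the first move I claim that if $\mathcal{A}$ is cofibrant in ${\bf S}\text{-}{\bf Cat}$ then, regarded as an object of ${\bf S}\text{-}{\bf Cat}(S)$ with $S=Ob(\mathcal{A})$, it is cofibrant there as well. In ${\bf S}\text{-}{\bf Cat}(S)$, equipped with the model structure transferred along $F_{S}\dashv U_{S}$ from ${\bf S}\text{-}{\bf Graph}(S)$, a trivial fibration is precisely a morphism that is levelwise a trivial fibration of ${\bf S}$ on mapping objects. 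Such a morphism $p$ is the identity on objects, hence surjective on objects, and is locally a trivial fibration of ${\bf S}$; by the characterization of trivial fibrations recorded just after Definition 2.1 it is therefore a trivial fibration in ${\bf S}\text{-}{\bf Cat}$. Confront now the map from the initial object $\emptyset_{S}\to\mathcal{A}$ with such a $p$ in a lifting square of ${\bf S}\text{-}{\bf Cat}(S)$. Viewed in ${\bf S}\text{-}{\bf Cat}$, the left-hand map is still a cofibration and $p$ is a trivial fibration, so the square has a filler there; since $p$ and the two given maps of the square are the identity on objects, any such filler is the identity on objects, hence is a morphism of ${\bf S}\text{-}{\bf Cat}(S)$ solving the original problem. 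Thus $\emptyset_{S}\to\mathcal{A}$ has the left lifting property against the trivial fibrations of ${\bf S}\text{-}{\bf Cat}(S)$, i.e. $\mathcal{A}$ is cofibrant there.

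For the second move I would use the theorem of Dwyer and Kan describing the cofibrant objects of ${\bf S}\text{-}{\bf Cat}(S)$: such an $\mathcal{A}$ is a retract of a \emph{free} simplicial category, one for which each $\mathcal{A}_{n}$ is the free category on a graph $G_{n}\subseteq\mathrm{Mor}(\mathcal{A}_{n})$, the degeneracy operators carrying $G_{n}$ into $G_{n+1}$. For a free such $\mathcal{A}$ one observes that $\mathcal{A}(a,a)_{n}=\mathcal{A}_{n}(a,a)$ is the \emph{free monoid} on the set $B_{n}$ of paths in $G_{n}$ from $a$ to $a$ that have length $\geq1$ and do not meet $a$ in their interior, because every path from $a$ to $a$ in $G_{n}$ factors uniquely as a concatenation of such. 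Moreover the degeneracies act on paths edgewise and preserve their vertex sequences, so they carry $B_{n}$ into $B_{n+1}$; hence $\mathcal{A}(a,a)$ is a free, in particular cofibrant, simplicial monoid. Finally $a^{\ast}=(-)(a,a)\colon{\bf S}\text{-}{\bf Cat}(S)\to Mon({\bf S})$ is a functor, so it sends a retract of a free simplicial category to a retract of a cofibrant simplicial monoid, which is again cofibrant. Combined with the first move, this proves the lemma.

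The point to appreciate, and the main obstacle, is that $\mathcal{A}\mapsto\mathcal{A}(a,a)$ is computed by \emph{no} left Quillen functor: the restriction functor $a^{\ast}$ is only a right adjoint (to $a_{!}$), and the analogous ``endomorphisms of the base object'' functor on the coslice $\mathcal{I}/{\bf S}\text{-}{\bf Cat}$ fails to preserve colimits, since gluing two simplicial categories along an object creates new endomorphisms of it and so replaces a coproduct of endomorphism monoids by a free product. Hence no purely formal argument is available, and one is forced to use a free (cellular) model of the cofibrant objects, as above. The two places calling for care are the verification that $\mathcal{A}(a,a)$ is genuinely a \emph{free} simplicial monoid---especially that the generating set $B_{n}$ is stable under the degeneracies---and, in the first move, the identification of the trivial fibrations of ${\bf S}\text{-}{\bf Cat}(S)$ with certain trivial fibrations of ${\bf S}\text{-}{\bf Cat}$.
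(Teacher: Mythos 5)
Your proposal is correct and follows essentially the same route as the paper: pass to ${\bf S}\text{-}{\bf Cat}(S)$, use the Dwyer--Kan description of its cofibrant objects as retracts of free simplicial categories, and exhibit $\mathcal{A}(a,a)$ as the free simplicial monoid on the paths from $a$ to $a$ whose interior avoids $a$, checking stability under the degeneracies (the paper packages the freeness statement as its Lemma 2.5 on full subcategories of free categories). The only small caveat is in your first move: the claim that $\emptyset_{S}\rightarrow\mathcal{A}$ is still a cofibration in ${\bf S}\text{-}{\bf Cat}$ is left unjustified (it is true, but not needed)---cofibrancy of $\mathcal{A}$ alone yields a lift of $\mathcal{A}\rightarrow Y$ against $p$ in ${\bf S}\text{-}{\bf Cat}$, and since that lift is the identity on objects it automatically agrees with the given map on $\emptyset_{S}$, hence solves the lifting problem in ${\bf S}\text{-}{\bf Cat}(S)$.
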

\begin{proof}
Let $S=Ob(\mathcal{A})$. $\mathcal{A}$ is cofibrant in 
{\bf S}\text{-}{\bf Cat} if and only if it is cofibrant as 
an object of ${\bf S}\text{-}{\bf Cat}(S)$. The cofibrant 
objects of ${\bf S}\text{-}{\bf Cat}(S)$ are characterized 
in \cite[7.6]{DK}: they are the retracts of free simplicial 
categories. Therefore it suffices to prove that if 
$\mathcal{A}$ is a free simplicial category then 
$a^{\ast}\mathcal{A}$ is a free simplicial category
for all $a\in S$.  There is a full and faithful functor
$\varphi:{\bf S}\text{-}{\bf Cat}\rightarrow {\bf Cat}^{\Delta^{op}}$ 
given by $Ob(\varphi(\mathcal{A})_{n})=Ob(\mathcal{A})$ for all $n\geq 0$
and $\varphi(\mathcal{A})_{n}(a,a')=\mathcal{A}(a,a')_{n}$.
Recall \cite[4.5]{DK} that $\mathcal{A}$ is a
free simplicial category if and only if ($i$) for all $n\geq 0$ the category
$\varphi(\mathcal{A})_{n}$ is a free category on a graph
$G_{n}$, and ($ii$) for all epimorphisms $\alpha:[m]\rightarrow [n]$
of $\Delta$, $\alpha^{\ast}:\varphi(\mathcal{A})_{n}\rightarrow
\varphi(\mathcal{A})_{m}$ maps $G_{n}$ into $G_{m}$.

Let $a\in S$. The category $\varphi(a^{\ast}\mathcal{A})_{n}$ 
is a full subcategory of $\varphi(\mathcal{A})_{n}$ 
with object set $\{a\}$, hence it is free as well. A set
$G^{a^{\ast}\mathcal{A}}_{n}$ of generators 
can be described as follows. An element of 
$G^{a^{\ast}\mathcal{A}}_{n}$ is a path from
$a$ to $a$ in $\varphi(\mathcal{A})_{n}$ such 
that every arrow in the path belongs to $G_{n}$ 
and there is at most one arrow in the path
with source and target $a$. The fact that 
$\varphi(a^{\ast}\mathcal{A})_{n}$ is indeed 
freely generated by $G^{a^{\ast}\mathcal{A}}_{n}$
follows from Lemma 2.5 and its proof. 
Since every epimorphism $\alpha:[m]\rightarrow [n]$ 
of $\Delta$ has a section,  $\alpha^{\ast}$ maps 
$G^{a^{\ast}\mathcal{A}}_{n}$ 
into $G^{a^{\ast}\mathcal{A}}_{m}$.
\end{proof}
\begin{lem}
A full subcategory of a free category is free.
\end{lem}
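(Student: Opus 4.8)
The plan is to set up the standard description of a free category and then analyze the full subcategory directly. Let $\mathcal{F}$ be the free category on a graph $G$, with vertex set $V$ and edge set $E$; its morphisms are the (possibly empty) directed paths in $G$, with composition given by concatenation. Let $W \subseteq V$ be a subset and let $\mathcal{F}|_W$ be the full subcategory on $W$. I want to exhibit a graph $H$ with vertex set $W$ such that $\mathcal{F}|_W$ is the free category on $H$. The natural candidate is: an edge of $H$ from $w$ to $w'$ (for $w,w' \in W$) is a nonempty path $w = v_0 \to v_1 \to \cdots \to v_n = w'$ in $G$ whose intermediate vertices $v_1,\dots,v_{n-1}$ all lie in $V \setminus W$ (a path "internal to the complement except at its endpoints"). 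Call such paths \emph{$W$-primitive}.

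The key steps, in order, would be: (1) observe that every morphism $w \to w'$ in $\mathcal{F}|_W$, being a path in $G$ between two vertices of $W$, decomposes \emph{uniquely} as a concatenation of $W$-primitive paths — one simply cuts the path at each point where it passes through a vertex of $W$; uniqueness is immediate because the cut points are precisely the occurrences of $W$-vertices along the path, which is intrinsic to the path. (2) Conclude from this that the set of morphisms of $\mathcal{F}|_W$ from $w$ to $w'$ is in bijection with the set of finite sequences of composable $W$-primitive paths starting at $w$ and ending at $w'$, with the empty sequence corresponding to the identity when $w = w'$ — but that is exactly the description of $\mathrm{Hom}$ in the free category on $H$. (3) Check that this bijection is functorial, i.e. compatible with composition (concatenation of sequences) and identities; this is routine since composition in $\mathcal{F}$ is concatenation of paths and the factorization is obtained by cutting at $W$-vertices, so cutting a concatenation is the concatenation of the cuttings.

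The main obstacle — really the only point requiring care — is the uniqueness of the $W$-primitive factorization, and in particular making sure one does not conflate a $W$-primitive \emph{path} in $G$ with its image morphism in $\mathcal{F}|_W$: distinct $W$-primitive paths with the same endpoints must be taken as distinct edges of $H$ (so $H$ can have parallel edges), exactly as the edges of $G$ are distinct from the length-one morphisms they generate. Once $H$ is declared to be the graph whose edges are the $W$-primitive paths of $G$ (not their images), the factorization statement is essentially a tautology about paths, and the free-ness of $\mathcal{F}|_W$ on $H$ follows. I would also remark that this is the statement invoked in the proof of Lemma 2.6 for the graph $G_n$, with $W = \{a\}$, and that the description of $G^{a^\ast\mathcal{A}}_n$ given there — paths from $a$ to $a$ all of whose arrows lie in $G_n$ and with at most one arrow having $a$ as both source and target — is precisely the set of $\{a\}$-primitive paths in $G_n$ (an $\{a\}$-primitive path from $a$ to $a$ visits $a$ only at its two endpoints, hence contains at most the one "loop" edge at $a$ and otherwise passes through $V \setminus \{a\}$).
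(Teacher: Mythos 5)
Your proof is correct and is essentially the paper's own argument: your ``$W$-primitive'' paths are exactly the paper's $C_{0}$-free paths, and the unique factorization of a morphism of the full subcategory obtained by cutting at its visits to $W$ is precisely how the paper exhibits the subcategory as freely generated by the graph of such paths. Nothing further is needed.
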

\begin{proof}
Let $F(G)$ be a free category generated
by a graph $G=(G_{1}\rightrightarrows G_{0})$.
An arrow $f$ of $F(G)$ is a generator if and only
if $f$ is \emph{indecomposable} ($f$ is not a unit 
and $f=vu$ implies $v$ or $u$ is a unit).
Let $C$ be a full subcategory
of $F(G)$ with $Ob(C)=C_{0}\subset G_{0}$.
If $x,y\in C_{0}$, let us say that a path
$(x_{1},f_{1},...,f_{n-1},x_{n})\colon x\to y$
in the graph $G$ is $C_{0}$-\emph{free} if 
$target(f_{i})\notin C_{0}$ for $1\leq i< n$. 
Let $G_{1}'$ be the set of $C_{0}$-free paths.
It is easy to see that every 
arrow of $C$ can be uniquely written as a 
finite composition of $C_{0}$-free paths, 
so that $C$ is freely generated by
the graph $(G_{1}'\rightrightarrows C_{0})$.
\end{proof}
\begin{remark}
The class of cofibrations of the model
category constructed in Theorem 2.3 
can be given an explicit description 
\cite[Section 4.2]{St1}.
\end{remark}
\begin{remark}
We noticed during the proof of 
Theorem 2.3 that our result is almost 
independent on Theorem 2.2, only a 
relaxed version of \cite[Proposition 2.3]{Be}
being needed. In particular, taking ${\bf M}={\bf S}$
in Theorem 2.3 results in a weaker version 
of Bergner's result. However, using the 
fact that {\bf S} has a monoidal fibrant 
replacement functor that preserves fibrations, 
the full Theorem 2.2 can be recovered.
\end{remark}
\begin{remark}
One can change the assumptions of 
Theorem 2.3 and the recognition principle 
used in its proof to obtain a similar outcome. 
For example, let {\bf M} be a cofibrantly 
generated monoidal {\bf S}-model category 
having cofibrant unit and which satisfies the 
monoid axiom. Suppose furthermore that 

$(a)$ a transfinite composition of weak equivalences 
of {\bf M} is a weak equivalence,

$(b)$ {\bf M} satisfies the technical condition of 
\cite[Theorem 2.1]{Ho1}, and 

$(c)$ in the Quillen pair 
$F:{\bf S}\rightleftarrows {\bf M}:G$ 
guaranteed by the definition, the functor 
$G$ preserves weak equivalences. 

Then \cite[Theorem 11.3.1]{Hi} can be 
used to show that {\bf M}\text{-}{\bf Cat} 
admits a cofibrantly generated model category
structure in which the weak equivalences are 
the DK-equivalences and the fibrations are the 
DK-fibrations. The proof proceeds in th same way
as the proof of Theorem 2.3, Step 3 remains 
unchanged but Step 2 requires suitable modifications.
Condition $(b)$ can be relaxed, it was stated in this 
form in order to include examples such as compactly 
generated spaces \cite{Ho1}.
\end{remark}

\section{Pushouts along full and faithful functors}
A result of R. Fritsch and D.M. Latch \cite[Proposition 5.2]{FL}
says that the pushout of a full and faithful functor 
is full and faithful. The purpose of this section 
is to extend this result to categories enriched
over a monoidal category.

Let $(\mathcal{V},\otimes,I)$ be a cocomplete closed category. 
We denote by $\mathcal{V}$\text{-}{\bf Cat} the category of small 
$\mathcal{V}$-categories and by $\mathcal{V}$\text{-}{\bf Graph}
that of small $\mathcal{V}$-graphs. A $\mathcal{V}$-functor, or a 
map of $\mathcal{V}$-graphs, that is locally an isomorphism 
(Section 2.2) is said to be \emph{full and faithful}. If $S$ is a set, we denote by 
$\mathcal{V}$\text{-}{\bf Cat}$(S)$ (resp. $\mathcal{V}$\text{-}{\bf Graph}$(S)$)
the category of small $\mathcal{V}$-categories (resp. $\mathcal{V}$-graphs)
with fixed set of objects $S$. The category $\mathcal{V}$\text{-}{\bf Graph}$(S)$
is a monoidal category with monoidal product $\square_{S}$ 
and unit which we denote by $\mathcal{I}_{S}$. 
\begin{proposition}
Let $\mathcal{A}$, $\mathcal{B}$ and $\mathcal{C}$ be three small
$\mathcal{V}$-categories and let $i:\mathcal{A}\hookrightarrow
\mathcal{B}$ be a full and faithful inclusion. Then in the pushout
diagram of $\mathcal{V}$-categories
\[
\xymatrix{
\mathcal{A} \ar[r]^{i} \ar[d]_{f} & \mathcal{B} \ar[d]^{g}\\
\mathcal{C} \ar[r]^{i'} & \mathcal{D}\\
}
  \]
the map $i':\mathcal{C}\rightarrow \mathcal{D}$ is a full and
faithful inclusion.
\end{proposition}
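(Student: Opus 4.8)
The plan is to make the pushout explicit at the level of $\mathcal{V}$-graphs and then to identify the hom-objects of $\mathcal{D}$ between objects lying in the image of $\mathcal{C}$. First I would note that, since the forgetful functor to $\mathcal{V}$-graphs does not preserve pushouts, one should instead reduce to a pushout in $\mathcal{V}\text{-}{\bf Cat}(S)$ for a suitable fixed object set. Concretely, because $i$ is bijective on objects would be false in general — it is full and faithful, hence injective on objects up to the identification it induces — the object set of $\mathcal{D}$ is the pushout $Ob(\mathcal{C})\amalg_{Ob(\mathcal{A})}Ob(\mathcal{B})$ in $\mathbf{Set}$, and $i'$ is injective on objects (this is the ``inclusion'' part, and it is the easy half). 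Write $S=Ob(\mathcal{C})$, let $T=Ob(\mathcal{D})$, and push $i$ forward along $f$ to a map in $\mathcal{V}\text{-}{\bf Cat}(S)$; the remaining pushout is then computed in $\mathcal{V}\text{-}{\bf Cat}(T)$, where the free/forgetful adjunction $F_T\dashv U_T$ lets one describe $\mathcal{D}$ as a coequalizer of free $\mathcal{V}$-categories on explicit $\mathcal{V}$-graphs.

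The heart of the argument is the combinatorial description of composable strings. An arrow of $\mathcal{D}$ between two objects is represented by an alternating word of arrows, coming alternately from $\mathcal{B}$ (more precisely from the part of $\mathcal{B}$ not in $\mathcal{A}$) and from $\mathcal{C}$, glued along arrows of $\mathcal{A}$ at the interfaces; composition is concatenation with the relations coming from composition inside $\mathcal{B}$ and inside $\mathcal{C}$ and the identification $f\circ i = g\circ i$ of $\mathcal{A}$-arrows. This is the enriched analogue of the normal-form description used by Fritsch--Latch. The key point is that if both endpoints of such a word lie in $Ob(\mathcal{C})\subset T$, then fullness and faithfulness of $i$ forces the word to reduce: any maximal $\mathcal{B}$-subword that begins and ends at objects of $\mathcal{A}$ can be absorbed, via the isomorphism $\mathcal{A}(a,a')\xrightarrow{\ \sim\ }\mathcal{B}(ia,ia')$, into the adjacent $\mathcal{C}$-arrows, and any $\mathcal{B}$-subword with an endpoint \emph{not} in $Ob(\mathcal{A})$ cannot be attached to a $\mathcal{C}$-arrow at all. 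Hence every word with endpoints in $S$ reduces to a single $\mathcal{C}$-arrow, and one gets $\mathcal{D}(i'c, i'c')\cong \mathcal{C}(c,c')$ naturally in $c,c'$, which is exactly the assertion that $i'$ is full and faithful.

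To organize this cleanly I would introduce the ``length filtration'' on the words (number of $\mathcal{B}$-syllables) and argue by the above reduction that on the subcategory spanned by $S$ only length-zero words survive; equivalently, exhibit the obvious map $\mathcal{C}\to\mathcal{D}$ as having a retraction onto the full subcategory of $\mathcal{D}$ on $S$ and check that this retraction is locally the claimed inverse isomorphism, using that $U_T$ of a colimit of free objects is computed syllable-wise in $\mathcal{V}$ because $\otimes$ preserves colimits in each variable. I expect the main obstacle to be purely bookkeeping: making the "reduction of alternating words" rigorous as a statement about the coequalizer presentation in $\mathcal{V}\text{-}{\bf Graph}(T)$, i.e. showing that the relations suffice to kill every word of positive length between objects of $S$ and that nothing further is collapsed, so that the induced map $\mathcal{C}(c,c')\to\mathcal{D}(i'c,i'c')$ is not merely epic but iso. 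Once that normal-form lemma is in place, naturality and the ``inclusion'' claim are routine, and the proposition follows.
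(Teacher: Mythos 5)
Your guiding picture --- that between objects of $\mathcal{C}$ every alternating word reduces to a single $\mathcal{C}$-arrow, full faithfulness of $i$ being what lets a $\mathcal{B}$-syllable with endpoints in $Ob(\mathcal{A})$ be absorbed into the neighbouring $\mathcal{C}$-arrows --- is exactly the Fritsch--Latch argument, and it is the right intuition. The genuine gap is that this is an elementwise argument, while the proposition is stated for an arbitrary cocomplete closed $\mathcal{V}$: the hom-objects of $\mathcal{D}$ are objects of $\mathcal{V}$, not sets of arrows, so there are no ``words'' to reduce, no ``length filtration'' of elements, and no elementwise meaning to the assertion that ``nothing further is collapsed.'' The step you set aside as purely bookkeeping --- making the normal-form claim rigorous at the level of the coequalizer presentation in $\mathcal{V}\text{-}{\bf Graph}(T)$ --- is in fact the entire content of the proof, and it cannot be done by tracking generators and relations; it requires producing an explicit, element-free candidate for $\mathcal{D}$ and then verifying both that this candidate is a $\mathcal{V}$-category and that it has the universal property of the pushout.

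That is what the paper's proof does: it \emph{defines} $\mathcal{D}(p,q)=\mathcal{C}(p,q)$ for $p,q\in Ob(\mathcal{C})$ (so full faithfulness of $i'$ holds by construction), defines the mixed homs by coends such as $\int^{x\in Ob(\mathcal{A})}\mathcal{B}(x,q)\otimes\mathcal{C}(p,f(x))$, and the homs between two new objects by a pushout of coends; these coends are precisely the enriched substitute for your reduced words, your absorption step being encoded in the coend identifications via $\mathcal{A}(x,y)\cong\mathcal{B}(ix,iy)$. The substantive work is then (i) a reduction, by writing $\mathcal{B}$ as a filtered colimit of full subcategories obtained by adjoining finitely many objects (using that the forgetful functor to $\mathcal{V}$-graphs preserves filtered colimits), to the case $Ob(\mathcal{B})=Ob(\mathcal{A})\cup\{q\}$; (ii) a factorization of $f$ as an identity-on-objects functor followed by a full and faithful one, handled by two consecutive pushouts; and (iii) in the identity-on-objects case, realizing $\mathcal{D}$ as a pushout of $(\mathcal{B},\mathcal{B})$-bimodules for $\square_{Ob(\mathcal{B})}$, equipping $\mathcal{B}\square_{i_{!}\mathcal{A}}i_{!}\mathcal{C}\square_{i_{!}\mathcal{A}}\mathcal{B}$ with a monoid structure and transporting it to $\mathcal{D}$, and only then checking the universal property. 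So while your plan is morally the same construction, as written it only proves the case $\mathcal{V}=Set$; to make it a proof in the stated generality you would have to replace the word-reduction lemma by exactly this coend/bimodule construction and its verification, which is where the real difficulty lies.
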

\begin{proof}
We shall construct $\mathcal{D}$ explicitly, as was done in the
proof of \cite[Proposition 5.2]{FL}. On objects we put
$Ob(\mathcal{D})=Ob(\mathcal{C})\sqcup
(Ob(\mathcal{B})-Ob(\mathcal{A}))$ and
$\mathcal{D}(p,q)=\mathcal{C}(p,q)$ if $p,q\in Ob(\mathcal{C})$. For
$p\in Ob(\mathcal{C})$ and $q\in (Ob(\mathcal{B})-Ob(\mathcal{A}))$
we define $$\mathcal{D}(p,q)=\int^{x\in
Ob(\mathcal{A})}\mathcal{B}(x,q)\otimes \mathcal{C}(p,f(x))$$ For
$p\in (Ob(\mathcal{B})-Ob(\mathcal{A}))$ and $q\in Ob(\mathcal{C})$
we define
$$\mathcal{D}(p,q)=\int^{x\in
Ob(\mathcal{A})}\mathcal{C}(f(x),q)\otimes \mathcal{B}(p,x)$$ For
$p,q\in (Ob(\mathcal{B})-Ob(\mathcal{A}))$ we define
$\mathcal{D}(p,q)$ to be the pushout
\[
\xymatrix{ \int^{x\in Ob(\mathcal{A})}\mathcal{B}(x,q)\otimes
\mathcal{B}(p,x) \ar[r] \ar[d] & \int^{x\in
Ob(\mathcal{A})}\int^{y\in Ob(\mathcal{A})}\mathcal{B}(x,q)\otimes
\mathcal{C}(f(y),f(x))\otimes \mathcal{B}(p,y) \ar[d]\\
\mathcal{B}(p,q) \ar[r] & \mathcal{D}(p,q)\\
}
  \] 
We shall describe a way to see that, with the above definition, 
$\mathcal{D}$ is indeed a $\mathcal{V}$-category.

Let $(\mathcal{B}-\mathcal{A})^{+}$ be the preorder with objects all
finite subsets $S\subset Ob(\mathcal{B})-Ob(\mathcal{A})$, ordered
by inclusion. For $S\in (\mathcal{B}-\mathcal{A})^{+}$, let
$\mathcal{A}_{S}$ be the full sub-$\mathcal{V}$-category of
$\mathcal{B}$ with objects $Ob(\mathcal{A})\cup S$. Then
$\mathcal{B}=\underset{(\mathcal{B}-\mathcal{A})^{+}}\lim
\mathcal{A}_{S}$. On the other hand, a filtered colimit of full and
faithful inclusions of $\mathcal{V}$-categories is a full and
faithful inclusion. This is because the forgetful functor from $\mathcal{V}$\text{-}{\bf Cat}
to $\mathcal{V}$\text{-}{\bf Graph} preserves filtered colimits \cite[Corollary 3.4]{KL}
and a filtered colimit of full and faithful inclusions of
$\mathcal{V}$-graphs is a full and faithful inclusion. Therefore one
can assume from the beginning that
$Ob(\mathcal{B})=Ob(\mathcal{A})\cup \{q\}$, where $q\not \in
Ob(\mathcal{A})$.

\emph{Case 1: $f$ is full and faithful.} In this case the pushout
giving $\mathcal{D}(q,q)$ is simply $\mathcal{B}(q,q)$, all the
other formulas remain unchanged. Then to show that $\mathcal{D}$ is
a $\mathcal{V}$-category is straightforward.

\emph{Case 2: $f$ is the identity on objects.} The map $i$ induces
an adjoint pair
$$i_{!}:\mathcal{V}\text{-}{\bf Cat}(Ob(\mathcal{A}))\rightleftarrows
\mathcal{V}\text{-}{\bf Cat}(Ob(\mathcal{B})):i^{\ast}$$ One has
$$i_{!}\mathcal{A}(a,a') =
  \begin{cases}
\mathcal{A}(a,a'), & \text{if }  a,a'\in Ob(\mathcal{A}),\\
\emptyset, & \text{otherwise},\\
I, & \text{if } a=a'=q,\\
\end{cases} $$
and $i$ factors as $\mathcal{A} \rightarrow i_{!}\mathcal{A}
\rightarrow \mathcal{B}$, where $i_{!}\mathcal{A} \rightarrow
\mathcal{B}$ is the obvious map in
$\mathcal{V}\text{-}{\bf Cat}(Ob(\mathcal{B}))$. Then the original
pushout can be computed using the pushout diagram
\[
\xymatrix{
i_{!}\mathcal{A} \ar[r] \ar[d]_{i_{!}f} & \mathcal{B} \ar[d]\\
i_{!}\mathcal{C} \ar[r] & \mathcal{D}\\
}
  \]
in $\mathcal{V}\text{-}{\bf Cat}(Ob(\mathcal{B}))$. Next, we 
claim that $\mathcal{D}$ can be calculated as the pushout, in the
category $_\mathcal{B}Mod_{\mathcal{B}}$ of
$(\mathcal{B},\mathcal{B})$-bimodules in 
$$(\mathcal{V}\text{-}{\bf Cat}(Ob(\mathcal{B})), 
\square_{Ob(\mathcal{B})}, \mathcal{I}_{Ob(\mathcal{B})})$$
of the diagram
\[
\xymatrix{ \mathcal{B}\square_{i_{!}\mathcal{A}}\mathcal{B}
\ar[rrr]^{
\mathcal{B}\square_{i_{!}\mathcal{A}}i_{!}f\square_{i_{!}\mathcal{A}}\mathcal{B}}
\ar[d] & & &
\mathcal{B}\square_{i_{!}\mathcal{A}}i_{!}\mathcal{C}\square_{i_{!}\mathcal{A}}\mathcal{B} \ar[d]^{m}\\
\mathcal{B} \ar[rrr] & & & \mathcal{D}\\
}
  \]
For this we have to show that $\mathcal{D}$ is a monoid in
$_\mathcal{B}Mod_{\mathcal{B}}$. We first show that
$\mathcal{B}\square_{i_{!}\mathcal{A}}i_{!}\mathcal{C}\square_{i_{!}\mathcal{A}}\mathcal{B}$
is a monoid in $_\mathcal{B}Mod_{\mathcal{B}}$. There is a canonical
isomorphism
$$i_{!}\mathcal{C}\square_{i_{!}\mathcal{A}}i_{!}\mathcal{C}\cong
i_{!}\mathcal{C}\square_{i_{!}\mathcal{A}}\mathcal{B}
\square_{i_{!}\mathcal{A}}i_{!}\mathcal{C}$$ of
$(i_{!}\mathcal{A},i_{!}\mathcal{A})$-bimodules which is best seen
pointwise, using coends. This provides a multiplication for
$\mathcal{B}\square_{i_{!}\mathcal{A}}i_{!}\mathcal{C}\square_{i_{!}\mathcal{A}}\mathcal{B}$
which is again best seen to be associative by working pointwise,
using coends. To define a multiplication for $\mathcal{D}$ consider
the cube diagrams
\[
   \xymatrix{
\mathcal{B}\cdot
i_{!}\mathcal{A}\cdot\mathcal{B}\cdot_{\mathcal{B}}\mathcal{B}\cdot
i_{!}\mathcal{A}\cdot\mathcal{B} \ar[rr] \ar[dr] \ar[dd] & &
\mathcal{B}\cdot_{\mathcal{B}}\mathcal{B}\cdot i_{!}\mathcal{A}\cdot\mathcal{B} \ar[drr] \ar[dd] \\
& \mathcal{B}\cdot
i_{!}\mathcal{C}\cdot\mathcal{B}\cdot_{\mathcal{B}}\mathcal{B}\cdot
i_{!}\mathcal{A}\cdot\mathcal{B} \ar[rrr] \ar[dd] & & &
\mathcal{D}\cdot_{\mathcal{B}}\mathcal{B}\cdot
i_{!}\mathcal{A}\cdot\mathcal{B} \ar[dd]\\
\mathcal{B}\cdot
i_{!}\mathcal{A}\cdot\mathcal{B}\cdot_{\mathcal{B}}\mathcal{B}\cdot
i_{!}\mathcal{C}\cdot\mathcal{B}
\ar[rr] \ar[dr] & & \mathcal{B}\cdot_{\mathcal{B}}\mathcal{B}\cdot i_{!}\mathcal{C}\cdot\mathcal{B} \ar[drr]\\
& \mathcal{B}\cdot
i_{!}\mathcal{C}\cdot\mathcal{B}\cdot_{\mathcal{B}}\mathcal{B}\cdot
i_{!}\mathcal{C}\cdot\mathcal{B}
 \ar[rrr] & & & \mathcal{D}\cdot_{\mathcal{B}}\mathcal{B}\cdot i_{!}\mathcal{C}\cdot\mathcal{B}\\
 }
   \]
and
\[
   \xymatrix{
\mathcal{B}\cdot_{\mathcal{B}}\mathcal{B}\cdot
i_{!}\mathcal{A}\cdot\mathcal{B} \ar[rr] \ar[dr] \ar[dd] & &
 \mathcal{B}\cdot_{\mathcal{B}}\mathcal{B} \ar[drr] \ar[dd] \\
& \mathcal{D}\cdot_{\mathcal{B}}\mathcal{B}\cdot
i_{!}\mathcal{A}\cdot\mathcal{B}
 \ar[rrr] \ar[dd] & & & \mathcal{D}\cdot_{\mathcal{B}}\mathcal{B} \ar[dd]\\
 \mathcal{B}\cdot_{\mathcal{B}}\mathcal{B}\cdot i_{!}\mathcal{C}\cdot\mathcal{B}
  \ar[rr] \ar[dr] & & \mathcal{B}\cdot_{\mathcal{B}}\mathcal{D} \ar[drr]\\
& \mathcal{D}\cdot_{\mathcal{B}}\mathcal{B}\cdot
i_{!}\mathcal{C}\cdot\mathcal{B}
 \ar[rrr] & & & \mathcal{D}\cdot_{\mathcal{B}}\mathcal{D}\\
 }
   \]
For space considerations we have suppressed tensors (always over
$i_{!}\mathcal{A}$, unless explicitly indicated) from notation. The
right face of the first cube is the same as the left face of the
latter cube. Let $PO_{1}$ (resp. $PO_{2}$) be the pushout of the
left (resp. right) face of the first cube diagram. Let $PO_{3}$ be
the pushout of the right face of the second cube diagram. We have
pushout digrams
\[
\xymatrix{ PO_{1} \ar[r] \ar[d] & PO_{2} \ar[r] \ar[d] & PO_{3}
\ar[d]\\
\mathcal{B}\cdot i_{!}\mathcal{C}\cdot
\mathcal{B}\cdot_{\mathcal{B}}\mathcal{B}\cdot
i_{!}\mathcal{C}\cdot\mathcal{B} \ar[r] &
\mathcal{D}\cdot_{\mathcal{B}}\mathcal{B}\cdot
i_{!}\mathcal{C}\cdot\mathcal{B} \ar[r] &
\mathcal{D}\cdot_{\mathcal{B}}\mathcal{D}
 }
  \]
Using these pushouts and the fact that
$\mathcal{B}\square_{i_{!}\mathcal{A}}i_{!}
\mathcal{C}\square_{i_{!}\mathcal{A}}\mathcal{B}$
is a monoid one can define in a canonical way a map
$\mu:\mathcal{D}\cdot_{\mathcal{B}}\mathcal{D}\rightarrow
\mathcal{D}$. We omit the long verification that $\mu$ gives
$\mathcal{D}$ the structure of a monoid. The map $\mu$ was
constructed in such a way that $m$ becomes a morphism of monoids.
The fact that $\mathcal{D}$ has the universal property of the
pushout in the category $\mathcal{V}\text{-}{\bf Cat}(Ob(\mathcal{B}))$
follows from its definition.

\emph{Case 3: $f$ is arbitrary.} Let $u=Ob(f)$. We factorize $f$ as
$\mathcal{A}\overset{f^{u}} \rightarrow u^{\ast}\mathcal{C}
\rightarrow \mathcal{C}$, where $Ob(u^{\ast}\mathcal{C})=Ob(\mathcal{A})$,
$u^{\ast}\mathcal{C}(a,a')=\mathcal{C}(fa,fa')$ and $f^{u}$ 
is the obvious map, and take consecutive pushouts:
\[
\xymatrix{
\mathcal{A} \ar[r]^{i} \ar[d]_{f^{u}} & \mathcal{B} \ar[d]\\
u^{*}\mathcal{C} \ar[d] \ar[r] & \mathcal{A'} \ar[d]\\
\mathcal{C} \ar[r] & \mathcal{D}\\
}
  \]
Now apply Case 2  to $f^{u}$ and Case 1
to $u^{\ast}\mathcal{C}\rightarrow \mathcal{C}$.
\end{proof}

\section{Application: left Bousfield localizations 
of categories of monoids}
This section was motivated by the paragraph `As we 
mentioned above,...in general.' on page 111 of \cite{Ho2}.

\subsection{The problem}
Let {\bf M} be a (suitable) monoidal model category, 
$L${\bf M} a left Bousfield localization of {\bf M}
which is itself a monoidal model category and 
$Mon({\bf M})$ the category of monoids in {\bf M}. 
The problem is to induce on $Mon({\bf M})$ a model 
category structure somehow related to $L${\bf M}.
As pointed out in \cite{Ho2}, such a model structure 
exists if, for example, $(a)$ $L${\bf M} 
satisfies the monoid axiom or $(b)$ $Mon({\bf M})$ 
has a suitable left proper model category structure. 
In order for $(a)$ to be fulfilled one needs to know 
the (generating) trivial cofibrations of $L${\bf M}.
However, it often happens that one does not have 
an explicit description of them. For $(b)$, the category of 
monoids in a monoidal model category is rarely known to 
be left proper (it is left proper when the underlying 
model category has all objects cofibrant, for instance,
which seems to us too restrictive to work with).

\subsection{Our solution}
We shall propose below a solution to the above 
problem. We shall reduce the verification of the monoid 
axiom for $L${\bf M} to a smaller---and hopefully more 
tractable in practice, set of maps and we shall avoid left 
properness by using Theorem 0.1 via Proposition 1.3.
The model category theoretical framework 
will be the `combinatorial' counterpart of the one 
of \cite[Section 8]{Ho2}. 

It will be clear that the method could potentially 
be applied to other structures than monoids.

\subsubsection{Recollections on enriched 
left Bousfield localization}
We recall some facts from \cite{Ba}.
Let $\mathcal{V}$ be a monoidal model 
category and {\bf M} a model $\mathcal{V}$-category 
with tensor, hom and cotensor denoted by
$$-\ast-:\mathcal{V}\times {\bf M}\rightarrow {\bf M}$$
$$Map(-,-):{\bf M}^{op}\times {\bf M}\rightarrow \mathcal{V}$$
$$(-)^{(-)}:\mathcal{V}^{op}\times {\bf M}\rightarrow {\bf M}$$
Let $S$ be a set of maps of {\bf M} between cofibrant objects.
\begin{definition}
A fibrant object $W$ of {\bf M} is $S$-\emph{local} if for every
$f\in S$ the map $Map(f,W)$ is a weak equivalence of $\mathcal{V}$.
A map $f$ of {\bf M} is an $S$-\emph{local equivalence}
if for every $S$-local object $W$ and for some (hence any)
cofibrant approximation $\tilde{f}$ to $f$, the map 
$Map(\tilde{f},W)$ is a weak equivalence of $\mathcal{V}$.
\end{definition}
In the previous definition, if the map $Map(\tilde{f},W)$ is 
a weak equivalence of $\mathcal{V}$, then for any other 
cofibrant approximation $\tilde{g}$ to $f$, the map 
$Map(\tilde{g},W)$ is a weak equivalence of 
$\mathcal{V}$ \cite[Proposition 14.6.6(1)]{Hi}.
\begin{theorem}
\cite{Ba} Let $\mathcal{V}$ be a combinatorial monoidal 
model category, {\bf M} a left proper, combinatorial model 
$\mathcal{V}$-category and $S$ a set of maps of {\bf M} 
between cofibrant objects. Suppose that $\mathcal{V}$ 
has a set of generating cofibrations with cofibrant domains.

Then the category {\bf M} admits a left proper, combinatorial 
model category structure, denoted by $L_{S}{\bf M}$, with 
the class of $S$-local equivalences as weak equivalences and 
the same cofibrations as the given ones. The fibrant objects of 
$L_{S}{\bf M}$ are the $S$-local objects. 
$L_{S}{\bf M}$ is a model $\mathcal{V}$-category. 

Suppose that, moreover, {\bf M} is a monoidal model 
$\mathcal{V}$-category which has a set of generating 
cofibrations with cofibrant domains. Let us denote by 
$\otimes$ the monoidal product on {\bf M}. 
If $X\otimes f$ is an $S$-local equivalence
for every $f\in S$ and every $X$ belonging to the domains 
and codomains of the generating cofibrations of {\bf M}, 
then $L_{S}{\bf M}$ is a monoidal model 
$\mathcal{V}$-category.
\end{theorem}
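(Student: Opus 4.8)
The plan is to derive the underlying model structure from the recognition principle of Theorem 0.1, taking the weak equivalences to be the class $W$ of $S$-local equivalences and retaining a set $I$ of generating cofibrations of {\bf M} as generating cofibrations, so that the cofibrations are unchanged by construction. Two of the hypotheses of Theorem 0.1 are then essentially immediate. Condition $c0$ (two out of three for $W$) follows from the definition of an $S$-local equivalence together with two out of three in $\mathcal{V}$ and the functoriality of cofibrant approximation. Condition $c1$ holds because $\mathrm{inj}(I)$ is exactly the class of trivial fibrations of {\bf M}, every weak equivalence of {\bf M} is an $S$-local equivalence (testing a genuine weak equivalence against an $S$-local, hence fibrant, object yields a weak equivalence of $\mathcal{V}$), and trivial fibrations are in particular weak equivalences.

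The two substantial inputs are the accessibility of $W$ and condition $c2$. For accessibility I would first observe that the full subcategory of $S$-local objects is accessible and accessibly embedded in {\bf M}: it is cut out by the conditions, one for each $f\in S$, that $Map(f,-)$ carry the object to a weak equivalence of the combinatorial category $\mathcal{V}$, and the class of such weak equivalences is itself accessible, so an intersection over the set $S$ of preimages of accessible classes remains accessible. The standard solution-set argument for localizations then shows that $W$ is an accessible subcategory of $\mathrm{Mor}({\bf M})$. Condition $c2$---that $\mathrm{cof}(I)\cap W$ is closed under pushout and transfinite composition---is where left properness of {\bf M} is indispensable, and I expect it to be the main obstacle for the underlying structure. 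The key point is a gluing statement: the pushout of a cofibration lying in $W$ along an arbitrary map again lies in $W$; one proves this by testing against an arbitrary $S$-local object $W_{0}$, where left properness guarantees that the pushout is sent by the derived $Map(-,W_{0})$ to a homotopy pushout in $\mathcal{V}$, so that the gluing lemma applies. Closure under transfinite composition is handled similarly. With $c2$ in hand, Theorem 0.1 produces the combinatorial model structure $L_{S}{\bf M}$, and its fibrant objects are identified with the $S$-local objects by the lifting characterisation furnished by Theorem 0.1 and $c2$.

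To see that $L_{S}{\bf M}$ is a model $\mathcal{V}$-category I would verify the enriched pushout--product axiom, which by saturation need only be checked on generating (trivial) cofibrations. The cofibration clause and the sub-case in which the $\mathcal{V}$-factor is a trivial cofibration are inherited verbatim from {\bf M}, the latter producing a trivial cofibration of {\bf M}, hence an $S$-local equivalence. The one new sub-case pairs a cofibration $i$ of $\mathcal{V}$ with a generating trivial cofibration $j$ of $L_{S}{\bf M}$; passing to adjoints, for each $S$-local $W_{0}$ the map $Map(i\,\square\,j,W_{0})$ is the pullback--hom in $\mathcal{V}$ of $i$ against $Map(j,W_{0})$. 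Since $j$ is a cofibration and $W_{0}$ is fibrant, $Map(j,W_{0})$ is a fibration of $\mathcal{V}$ by the enriched axiom for {\bf M}; since $j$ is an $S$-local equivalence and $W_{0}$ is $S$-local, it is also a weak equivalence, hence a trivial fibration; and the pullback--hom of the cofibration $i$ against a trivial fibration is a trivial fibration of $\mathcal{V}$, in particular a weak equivalence. The assumption that the generating cofibrations of $\mathcal{V}$ have cofibrant domains is what keeps these reductions compatible with cofibrancy, and the unit axiom is inherited from {\bf M} since its weak equivalences are $S$-local equivalences.

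Finally, under the additional hypothesis I would establish the pushout--product axiom for $\otimes$ on $L_{S}{\bf M}$; the cofibration clause is again inherited from {\bf M} and the unit axiom holds because the unit remains cofibrant and ordinary weak equivalences are $S$-local. It remains to show that the pushout--product $f\,\square_{\otimes}\,g$ of a cofibration $f$ and a trivial cofibration $g$ of $L_{S}{\bf M}$ is an $S$-local equivalence. I would reduce on $f$ first: the class of $f$ for which this holds is closed under pushout, transfinite composition and retracts by $c2$ for $L_{S}{\bf M}$, so it suffices to take $f$ in $I$; then I would reduce on $g$ to a set of generating trivial cofibrations of $L_{S}{\bf M}$, which by the Hirschhorn description may be taken to be the generating trivial cofibrations of {\bf M} together with horns built from $I$ and from (cofibrant approximations of) the maps in $S$. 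On the first family the conclusion is precisely the pushout--product axiom of the monoidal model category {\bf M}; on the horns it reduces, after expanding the iterated pushout--products, to the hypothesis that $X\otimes f$ is an $S$-local equivalence for every $f\in S$ and every $X$ among the domains and codomains of the generating cofibrations. I expect this matching of the abstract trivial cofibrations of $L_{S}{\bf M}$ against the explicit horns, and the verification that the stated hypothesis is exactly what controls them, to be the most delicate part of the monoidal argument.
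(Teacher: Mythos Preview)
The paper does not prove this theorem at all: it is stated in the subsection ``Recollections on enriched left Bousfield localization'' and is attributed wholesale to Barwick \cite{Ba}. There is therefore no in-paper argument to compare your proposal against.

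That said, your outline is essentially the route Barwick takes, and the broad strokes are sound: apply Smith's theorem with $W$ the $S$-local equivalences and the original generating cofibrations; deduce accessibility of $W$ from accessibility of the weak equivalences of $\mathcal{V}$ and of the class of $S$-local objects; and use left properness to get $c2$. A few places deserve more care than you give them. First, your identification of the fibrant objects from ``Theorem 0.1 and $c2$'' is too quick: Smith's theorem as stated gives no description of fibrations or fibrant objects, and one really does need the Bousfield--Smith cardinality argument (or the construction of an explicit set of generating trivial cofibrations, the ``horns'' $i\ast' u_{f}$) to pin down the $S$-local objects as the fibrant ones. Second, in the monoidal step you invoke ``the Hirschhorn description'' of generating trivial cofibrations, but Hirschhorn's argument is for simplicial model categories; the point of Barwick's paper is precisely to supply the $\mathcal{V}$-enriched analogue, and this is where the hypothesis that $\mathcal{V}$ has generating cofibrations with cofibrant domains earns its keep. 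Finally, the reduction of the monoidal pushout--product axiom to the hypothesis on $X\otimes f$ is the genuinely delicate part (as you acknowledge); it goes through, but the ``expanding the iterated pushout--products'' hides a nontrivial inductive argument on horns that you should expect to spell out.
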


\subsubsection{The $S$-extended monoid axiom}
Let $\mathcal{V}$ be a monoidal model 
category and {\bf M} a monoidal model 
$\mathcal{V}$-category with monoidal product $\otimes$ 
and tensor, hom and cotensor denoted as in 4.2.1.
If $i:K\rightarrow L$ is a map of $\mathcal{V}$ and 
$f:A\rightarrow B$ a map of {\bf M}, we denote by $i\ast' f$
the canonical map $$L\ast A\underset{K\ast A}\cup K\ast 
B\rightarrow L\ast B$$ Let $S$ be a set of maps of {\bf M}
between cofibrant objects. For every $f\in S$, let 
$f=v_{f}u_{f}$ be a factorization of $f$ as a cofibration 
$u_{f}$ followed by a weak equivalence $v_{f}$;
a concrete one is the mapping cylinder factorization.
\begin{definition}
We say that {\bf M} satisfies the $S$-\emph{extended monoid 
axiom} if, in the notation of \cite[Section 3]{SS1}, every 
map in $$(\{{\rm trivial \ cofibrations \ of\ {\bf M}}\}
\cup (\{{\rm cofibrations\ of\ \mathcal{V}}\}\ast' u_{f})_{f\in S})
\otimes {\bf M}\text{-}{\rm cof_{reg}}$$ is an $S$-local equivalence.
\end{definition}
As usual \cite[Lemma 3.5(2)]{SS1}, if $\mathcal{V}$ and 
{\bf M} are cofibrantly generated and every map in 
$$(\{{\rm generating \ trivial \ cofibrations\ of\ {\bf M}}\}\cup 
(\{{\rm generating \ cofibrations \ of\ \mathcal{V}}\}\ast' u_{f})_{f\in S})
\otimes {\bf M}\text{-}{\rm cof_{reg}}$$ is an $S$-local 
equivalence, then the $S$-extended monoid axiom holds.

Let $Mon({\bf M})$ be the category
of monoids in {\bf M} and let $$T:{\bf M}
\rightleftarrows Mon({\bf M}):U$$ be 
the free-forgetful adjunction.
\begin{definition}
A monoid $M$ in {\bf M} is $TS$-\emph{local} if $U(M)$
is $S$-local. A map $f$ of monoids in {\bf M} is a 
$TS$-\emph{local equivalence} if $U(f)$ is an 
$S$-local equivalence.
\end{definition}
\begin{theorem}
Let $\mathcal{V}$ be a combinatorial monoidal model 
category having a set of generating cofibrations with cofibrant 
domains. Let {\bf M} be a left proper, combinatorial monoidal 
model $\mathcal{V}$-category which has a set of generating 
cofibrations with cofibrant domains. Let us denote by 
$\otimes$ the monoidal product on {\bf M}.
Let $S$ be a set of maps of {\bf M} between cofibrant objects. 
 Suppose that $X\otimes f$ is an $S$-local equivalence for every 
$f\in S$ and every $X$ belonging to the domains and codomains 
of the generating cofibrations of {\bf M} and that {\bf M}
satisfies the $S$-extended monoid axiom.

Then the category $Mon({\bf M})$ admits a combinatorial 
model category structure with $TS$-local equivalences 
as weak equivalences and with 
$T(\{{\rm cofibrations\ of \ {\bf M}}\})$ as cofibrations. 
The fibrant objects are the $TS$-local objects.
\end{theorem}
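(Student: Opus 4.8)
The plan is to apply Proposition 1.3 (hence Theorem 0.1) with $\mathcal{E}=Mon(\mathbf{M})$ and with $\mathrm{W}$ the class of $TS$-local equivalences. Fix a set $I_{\mathbf{M}}$ of generating cofibrations of $\mathbf{M}$ with cofibrant domains, a set $J_{\mathbf{M}}$ of generating trivial cofibrations of $\mathbf{M}$, and a set $I_{\mathcal{V}}$ of generating cofibrations of $\mathcal{V}$ with cofibrant domains. Put $I:=T(I_{\mathbf{M}})$ and $J:=T(J_{\mathbf{M}})\cup T(\{i\ast' u_{f}:i\in I_{\mathcal{V}},\ f\in S\})$; thus $J$ is $T$ applied to a set of generating trivial cofibrations of the enriched left Bousfield localization $L_{S}\mathbf{M}$ of Theorem 4.3. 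The opening remarks are routine: $Mon(\mathbf{M})$ is locally presentable, so every set of maps permits the small object argument; and since $L_{S}\mathbf{M}$ is combinatorial and $U\colon Mon(\mathbf{M})\to\mathbf{M}$ preserves all limits and filtered colimits (hence is accessible), $\mathrm{W}=U^{-1}(\{S\text{-local equivalences}\})$ is a full accessible subcategory of $\mathrm{Mor}(Mon(\mathbf{M}))$. I would also note at the outset that under the present hypotheses Theorem 4.3 makes $L_{S}\mathbf{M}$ a monoidal model category, and that each $u_{f}$ is a trivial cofibration of $L_{S}\mathbf{M}$: indeed $f\in S$ is an $S$-local equivalence and $v_{f}$, a weak equivalence of $\mathbf{M}$, is one too, so $u_{f}$ is an $S$-local equivalence by two-out-of-three while being a cofibration between cofibrant objects.

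First I would identify the naive fibrations and naively fibrant objects. By the $T\dashv U$ adjunction, $p\in\mathrm{inj}(J)$ iff $U(p)$ has the right lifting property against $J_{\mathbf{M}}$ and against every $i\ast' u_{f}$; the first says $U(p)$ is a fibration of $\mathbf{M}$, and---using that $u_{f}$ is a cofibration, the pushout-product axiom of the $\mathcal{V}$-model category $\mathbf{M}$, and the adjunction $\mathbf{M}(K\ast A,-)\cong\mathcal{V}(K,Map(A,-))$---the second says $Map(u_{f},U(p))$ is a trivial fibration of $\mathcal{V}$. Taking $p=(X\to 1)$ and using that $Map(v_{f},W)$ is a weak equivalence whenever $W$ is fibrant, one gets that $X$ is naively fibrant iff $U(X)$ is fibrant and $Map(f,U(X))$ is a weak equivalence for all $f\in S$, i.e. iff $X$ is $TS$-local; similarly a naive fibration between naively fibrant objects has underlying map a fibration of $L_{S}\mathbf{M}$ between $S$-local objects. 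Conditions $c0$, $c1$, $nc0$ of Proposition 1.3 are then immediate---$\mathrm{W}$ inherits two-out-of-three and weak invertibility from the $S$-local equivalences through the functor $U$, and $p\in\mathrm{inj}(I)$ forces $U(p)$ to be a trivial fibration, hence a weak equivalence, of $\mathbf{M}$, hence an $S$-local equivalence---and $nc2$ follows too: a $p$ between naively fibrant objects that is a naive fibration and lies in $\mathrm{W}$ has $U(p)$ a fibration of $L_{S}\mathbf{M}$ between $S$-local objects and an $S$-local equivalence, hence a trivial fibration of $L_{S}\mathbf{M}$, hence (same cofibrations as $\mathbf{M}$) a trivial fibration of $\mathbf{M}$, i.e. $p\in\mathrm{inj}(I)$.

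The real work is $nc1$, that $cell(J)\subset\mathrm{cof}(I)\cap\mathrm{W}$. The inclusion $cell(J)\subset\mathrm{cof}(I)$ is formal, since each generator of $J$ is $T$ of a cofibration of $\mathbf{M}$ (the $i\ast' u_{f}$ being cofibrations by the pushout-product axiom), $T$ is a left adjoint, and $\mathrm{cof}(I)=\mathrm{cof}(T(I_{\mathbf{M}}))$ is saturated. For $cell(J)\subset\mathrm{W}$ I would analyse, as in Schwede and Shipley \cite[Section 6]{SS1}, the underlying map in $\mathbf{M}$ of a relative $J$-cell complex $R\to R'$ in $Mon(\mathbf{M})$: for a map $g$ of $\mathbf{M}$ and a pushout of $T(g)$ along any map to a monoid $R$, the underlying map is a transfinite composition of pushouts of the maps $\mathrm{id}_{R}\otimes g^{\square n}\otimes\mathrm{id}_{R}$, $n\ge 1$, $g^{\square n}$ the $n$-fold pushout-product of $g$ in $\mathbf{M}$. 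It therefore suffices to show that for each generator $g\in J_{\mathbf{M}}\cup\{i\ast' u_{f}\}$ and each $n$ the map $\mathrm{id}_{R}\otimes g^{\square n}\otimes\mathrm{id}_{R}$ belongs to the class $(\mathcal{K}_{S}\otimes\mathbf{M})\text{-}\mathrm{cof}_{\mathrm{reg}}$ of the $S$-extended monoid axiom, with $\mathcal{K}_{S}=\{\text{trivial cofibrations of }\mathbf{M}\}\cup\{i\ast' u_{f}:i\ \text{a cofibration of}\ \mathcal{V},\ f\in S\}$; the $S$-extended monoid axiom then makes each such map, and---that class being closed under pushout and transfinite composition---all of $cell(J)$, an $S$-local equivalence. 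For $g\in J_{\mathbf{M}}$ this is clear, $g^{\square n}$ being a trivial cofibration of $\mathbf{M}$ by the pushout-product axiom. For $g=i\ast' u_{f}$ the key tool is the strong monoidal left adjoint $F:=(-)\ast e\colon\mathcal{V}\to\mathbf{M}$ ($e$ the unit of $\mathbf{M}$) coming with the $\mathcal{V}$-structure, for which $K\ast A\cong F(K)\otimes A$ and hence $i\ast' u_{f}\cong F(i)\square u_{f}$; since $F$ preserves pushout-products one rewrites $g^{\square n}$ in terms of pushout-products of $F(i^{\square n})$ with $u_{f}^{\square n}$, presents $u_{f}^{\square n}$ as a relative cell complex of pushout-products of $u_{f}$ with generators of $I_{\mathbf{M}}$, and reorganizes the tensor factors so that every cellular contribution to $\mathrm{id}_{R}\otimes g^{\square n}\otimes\mathrm{id}_{R}$ is of $(\mathcal{K}_{S}\otimes\mathbf{M})$-type. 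This reorganization---checking that each cellular piece of $g^{\square n}$, tensored on both sides by $R$, is within reach of the $S$-extended monoid axiom---is the main obstacle; it is exactly where one uses that $\mathbf{M}$ is a monoidal model $\mathcal{V}$-category, that $X\otimes f$ is an $S$-local equivalence for the relevant $X$ (so Theorem 4.3 applies and $u_{f}$ is a trivial cofibration of $L_{S}\mathbf{M}$), and the $S$-extended monoid axiom.

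Once $c0$, $c1$, $nc0$, $nc1$, $nc2$ are in place, Proposition 1.3 yields a cofibrantly generated---hence, $Mon(\mathbf{M})$ being locally presentable, combinatorial---model structure on $Mon(\mathbf{M})$ with the $TS$-local equivalences as weak equivalences and $\mathrm{cof}(T(I_{\mathbf{M}}))$---the class $T(\{\text{cofibrations of }\mathbf{M}\})$ of the statement---as cofibrations, and with the fibrant objects being the naively fibrant objects, which were identified above with the $TS$-local objects.
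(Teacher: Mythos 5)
Your route coincides with the paper's: you apply Proposition 1.3 to $Mon(\mathbf{M})$ with $\mathrm{W}$ the $TS$-local equivalences, $I=T(\{\text{generating cofibrations of }\mathbf{M}\})$ and $J=T(\{\text{generating trivial cofibrations of }\mathbf{M}\}\cup\{i\ast' u_f\})$, you identify the naively fibrant monoids with the $TS$-local ones by the same adjunction argument (using that $Map(u_f,U(M))$ is a fibration and $Map(v_f,U(M))$ a weak equivalence of $\mathcal{V}$), and your verifications of $c0$, $c1$, $nc0$ and $nc2$ (an $S$-local equivalence between $S$-local objects which is an $\mathbf{M}$-fibration is a trivial fibration of $\mathbf{M}$, hence in ${\rm inj}(I)$) are the paper's arguments, give or take the harmless detour through $L_S\mathbf{M}$. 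The paper disposes of $nc1$ in one line, as guaranteed by the $S$-extended monoid axiom together with the proof of Lemma 6.2 of Schwede--Shipley, which is exactly the mechanism you invoke.

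The one place where your write-up falls short of a proof is the step you yourself call ``the main obstacle'', and it is a genuine gap rather than a routine omission. For $g=i\ast' u_f$ the Schwede--Shipley filtration produces, as cells for the underlying map of a pushout of $T(g)$, maps which (after the symmetry shuffle, and up to the identification $K\ast A\cong(K\ast e)\otimes A$ that you use) are of the form $(i\ast' u_f)^{\square n}\otimes Z\cong\bigl(i^{\square n}\ast' u_f^{\square n}\bigr)\otimes Z$ with $Z$ a tensor power of the ambient monoid; for $n\ge 2$ these involve $u_f^{\square n}$ rather than $u_f$, so they are not literally among the maps $(\{\text{cofibrations of }\mathcal{V}\}\ast' u_f)\otimes\mathbf{M}$ covered by Definition 4.4. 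Your proposed remedy---writing $u_f^{\square(n-1)}$ as a relative $I_{\mathbf{M}}$-cell complex and ``reorganizing tensor factors''---does not obviously close this: $u_f^{\square(n-1)}$ is in general only a retract of an $I_{\mathbf{M}}$-cell complex (and ${\rm cof_{reg}}$-type classes are not closed under retracts), and even on cells the pieces produced are pushout-products $(i^{\square n}\ast' u_f)\square i'$ with $i'\in I_{\mathbf{M}}$, which are pushout-products with $\mathbf{M}$-cofibrations, not tensors with objects; so membership in the class of the $S$-extended monoid axiom is not established by the argument as sketched, and some further idea (or a strengthening of the axiom's generating class) is needed there. To be fair, this is precisely the point the paper compresses into its citation of \cite[Proof of Lemma 6.2]{SS1}; everything else in your proposal matches the paper's proof, including the concluding observations that local presentability gives combinatoriality and that the fibrant objects are the $TS$-local monoids.
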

\begin{proof}
We shall apply Theorem 0.1 via Proposition 1.3. 
We take $\mathcal{E}$ to be $Mon({\bf M})$, {\rm W} 
to be the class of $TS$-local equivalences, $I$ to be the 
set $T(\{{\rm generating \ cofibrations \ of\ {\bf M}}\})$ 
and $J$ to be $$T(\{{\rm generating \ trivial \ cofibrations \ of \ {\bf M}}\} 
\cup \{{\rm generating \ cofibrations\ of\ \mathcal{V}}\ast' u_{f}\}_{f\in S})$$
Notice that a map $g$ of monoids in {\bf M} belongs to 
${\rm inj}(T(\{{\rm generating \ cofibrations\ of \ {\bf M}}\}))$
if and only if $U(g)$ belongs to 
${\rm inj}(\{{\rm generating \ cofibrations\ of \ {\bf M}}\})$
if and only if $U(g)$ is a trivial fibration of {\bf M}. Therefore
condition $c1$ from Proposition 1.3 holds.

We claim that a monoid $M$ in {\bf M} is naively 
fibrant if and only if $M$ is $TS$-local. We may
assume without loss of generality that $U(M)$ is 
fibrant. We observe that if $i$ is any map of 
$\mathcal{V}$ and $f\in S$, then $M$ has the right
lifting property with respect to $T(i\ast' u_{f})$
if and only if $Map(u_{f},U(M))$ has the 
right lifting property with respect to $i$.
Since $Map(v_{f},U(M))$ is a weak equivalence 
of $\mathcal{V}$ and $Map(u_{f},U(M))$
is a fibration of $\mathcal{V}$, the claim 
follows from this observation.

Let now $g$ be a map of monoids in {\bf M}
between $TS$-local monoids such that $g$ is 
both a $TS$-local equivalence and a naive 
fibration. Then $U(g)$ is an $S$-local
equivalence between $S$-local objects,
so $U(g)$ is a weak equivalence. $U(g)$
is also a fibration, therefore condition 
$nc2$ from Proposition 1.3 holds.

Condition $nc1$ from Proposition 1.3
is guaranteed by the $S$-extended monoid 
axiom and \cite[Proof of Lemma 6.2]{SS1}.
\end{proof}

{\bf Acknowledgements.} We are deeply indebted to 
Andr\'{e} Joyal for many useful discussions and suggestions.

\end{document}